\numberwithin{equation}{section}
\newtheorem{thm}{Theorem}[section]
\newtheorem{prop}[thm]{Proposition}
\newtheorem{lem}[thm]{Lemma}
\newtheorem{cor}[thm]{Corollary}
\newtheorem{rem}[thm]{Remark}
\newcommand{\frakP}{{\mathfrak P}}
\newcommand{\bB}{{\mathbb B}}
\newcommand{\bG}{{\mathbb G}}
\newcommand{\bQ}{{\mathbb Q}}
\newcommand{\bR}{{\mathbb R}}
\newcommand{\bX}{{\mathbb X}}
\newcommand{\bZ}{{\mathbb Z}}
\newcommand{\bfA}{{\mathbf A}}
\newcommand{\bfB}{{\mathbf B}}
\newcommand{\bfC}{{\mathbf C}}
\newcommand{\bfa}{{\mathbf a}}
\newcommand{\bff}{{\mathbf f}}
\newcommand{\mA}{{\mathcal A}}
\newcommand{\mF}{{\mathcal F}}
\newcommand{\mG}{{\mathcal G}}
\newcommand{\mH}{{\mathcal H}}
\newcommand{\mI}{{\mathcal I}}
\newcommand{\mM}{{\mathcal M}}
\newcommand{\mN}{{\mathcal N}}
\newcommand{\mO}{{\mathcal O}}
\newcommand{\mP}{{\mathcal P}}
\newcommand{\mS}{{\mathcal S}}
\newcommand{\mT}{{\mathcal T}}
\newcommand{\mY}{{\mathcal Y}}
\newcommand{\scrA}{{\mathscr A}}
\newcommand{\scrB}{{\mathscr B}}
\newcommand{\al}{{\alpha}}
\newcommand{\be}{{\beta}}
\newcommand{\ga}{{\gamma}}
\newcommand{\Ga}{{\Gamma}}
\newcommand{\La}{{\Lambda}}
\newcommand{\Alg}{\mathrm{Alg}}
\newcommand{\Mod}{\mathrm{Mod}}
\newcommand{\lincat}{\mathrm{Lincat}}
\newcommand{\colim}{\mathrm{colim}}
\newcommand{\Hom}{\mathrm{Hom}}
\newcommand{\Aut}{\mathrm{Aut}}
\newcommand{\Ind}{\mathrm{Ind}}
\newcommand{\Res}{\mathrm{Res}}
\newcommand{\Spec}{\mathrm{Spec}}
\newcommand{\prestk}{\mathrm{PreStk}}
\newcommand{\corr}{\mathrm{Corr}}
\newcommand{\unip}{\mathrm{unip}}
\newcommand{\ad}{\mathrm{ad}}
\newcommand{\af}{\mathrm{aff}}
\newcommand{\Gr}{\mathrm{Gr}}
\newcommand{\xcoch}{\bX_\bullet}
\newcommand{\padic}{\bQ_p}
\newcommand{\ab}{\mathrm{ab}}
\newcommand{\mon}{\mathrm{mon}}
\newcommand{\bs}{\backslash}
\newcommand{\Shv}{\mathrm{Shv}}
\newcommand{\Gal}{\mathrm{Gal}}
\newcommand{\ext}{\mathrm{ext}}
\newcommand{\s}{\mathrm{sc}}
\newcommand\ov{\overline}
\newcommand\id{\mathrm{id}}
\newcommand\wt{\widetilde}
\newcommand\un{\underline}
\newcommand\br{\breve}
\newcommand\Int{\textup{Int}}
\newcommand{\Ximon}{\Xi\textup{-mon}}
\newcommand{\RG}{\mathrm{TPRG}}
\begin{document}

\markboth{\hfill{\rm Zhiwei Yun and Xinwen Zhu} \hfill}{\hfill {\rm Affine Hecke categories in equal and mixed characteristic \hfill}}

\title{Affine Hecke categories in equal and mixed characteristic}

\author{Zhiwei Yun and Xinwen Zhu}

\begin{abstract}
For a quasi-split tamely connected reductive group $G$ over a $p$-adic field, we prove that its (monodromic) affine Hecke category is canonically equivalent to its equal characteristic counterpart as monoidal categories.
\end{abstract}

\maketitle

\setcounter{tocdepth}{1}
\tableofcontents

\section{Introduction}\label{sec: introduction}

Let $G$ be a split connected reductive group over a Laurent series field $k(\!(u)\!)$.
Affine Hecke categories of $G$ are ubiquitous in modern geometric representation theory and related fields. The celebrated Bezrukavnikov equivalence provides a coherent realization of the unipotent affine Hecke category of $G$ in terms of the Langlands dual group $G^{\vee}$. This can be seen as the unipotent part of the local geometric Langlands correspondence, which has recently found numerous applications in diverse areas such as representation theory, algebraic geometry and knot theory.

For applications to certain questions in number theory, it is highly desirable to develop parallel theories for affine Hecke categories of a quasi-split connected reductive group $G$ over a $p$-adic field $F$. Here, a $p$-adic field is understood to be a characteristic zero, discretely valued complete local field whose residue field is a perfect field of characteristic $p>0$. In particular, establishing a version of Bezrukavnikov equivalence in this context is of significant interest. Recently, considerable progress has been made towards this problem, as seen in \cite{ALWY} and \cite{Ban23}. Notably, \cite{Ban23} constructed the so-called Fargues-Fontaine surface and used it to construct a deformation of the mixed characteristic affine flag variety to the equal characteristic affine flag variety. By utilizing this device, he proved that the mixed and the equal characteristic (unipotent) affine Hecke categories are equivalent, and hence established the Berukavnikov equivalence and the derived geometric Satake in mixed characteristic.

In this note, we reprove and generalize Bando's results by an entirely different method. Let $F$ be a finite extension of $\padic$, with $\mO$ its ring of integers, $\kappa$ its residue field and $\varpi\in \mO$ a uniformizer.  Let $k$ be a fixed algebraic closure of $\kappa$. Let $\breve F$ be the completion of the maximal unramified extension of $F$ with residue field $k$; denote by $\breve \mO$ the ring of integers in $\breve F$.

Let $G$ be a tamely ramified quasi-split connected reductive group over $F$, with a fixed pinning $(B,T,e: U\to \bG_a)$, where as usual $B$ denotes a Borel subgroup of $G$, $T$ a maximal torus of $B$, $U$ the unipotent radical of $B$ and $e: U\to \bG_a$ a homomorphism from $U$ to the additive group that is non-degenerate (in the sense that it is nontrivial when restricted to each simple root subgroup, when base changed to $\overline F$). 

The pinning $(B,T,e)$ determines an Iwahori group scheme $\mI$ of $G$ over $\mO$ (see \S \ref{Sec: unip}). Let $I=L^+\mI$ be the associated positive loop group over $\kappa$ as defined in \cite{Zhu17}. We recall that it is the perfection of the Greenberg realization $L^+_p\mI$, which assigns every $\kappa$-algebra $R$ the group $\mI(W_\mO(R))$, where $W_\mO(R)=W(R)\otimes_{W(\kappa)}\mO$ and $W(-)$ is the ring of $p$-typical Witt vectors of $R$. Let $I^+\subset I$ denote the pro-unipotent radical of $I$. The quotient  $\mS^{\natural}_\kappa:=I/I^+$ is a finite dimensional torus over $\kappa$. On the other hand, let $LG$ be the loop group of $G$. Recall it is a functor defined on the category of perfect $\kappa$-algebras, sending $R$ to $G(W_\mO(R)[1/p])$. For a geometric object $\mY$ defined over $\kappa$, we use $\mY_{k}$ to denote its base change to $k$, e.g., $LG_{k}, I_{k}$ and $\mS^{\natural}_{k}$. 

Fix a prime $\ell$ different from $p$, and we work with \'etale sheaves of $\La:=\bZ_{\ell}$-modules.
Consider the unipotent affine Hecke category
\[
\mH_{\unip,\af}:=\Shv\bigl(I_k\bs LG_k/I_k,\La\bigr),
\]
equipped with an automorphism $\sigma_*$ given by $*$-pushforward along the Frobenius endomorphism of $\sigma:I_k\bs LG_k/I_k$.
As in \cite{DLYZ25, Zhu25}, one can also define the monodromic affine Hecke category
\[
\mH_{\mon,\af}:=\Shv\bigl((I_k,\mon)\bs LG_k/(I_k,\mon),\La\bigr),
\] 
again equipped with a Frobenius automorphism $\sigma_*$. We remark that $\mH_{\mon,\af}$ is a full subcategory of $\Shv\bigl(I^{+}_k\bs LG_k/I^{+}_k,\La\bigr)$, whose objects are locally constant along left and right $\mS^{\natural}_{k}$-orbits. If we fix a character sheaf $\chi$ on $\mS^{\natural}_k$ (i.e., a rank one tame local system on $\mS^{\natural}_{k}$ with a trivialization of its stalk at $1\in \mS^{\natural}_{k}$), we can also consider the corresponding equivariant category with respect to $\chi$ both on the left and on the right
\[
{}_{\chi}\mH_{\chi,\af}=(\Mod_\La)_\chi\otimes_{\Shv_{\mon}(\mS^{\natural}_k)}\mH_{\mon,\af}\otimes_{\Shv_{\mon}(\mS^{\natural}_k)}(\Mod_\La)_\chi.
\]
In particular, when $\chi=u$ is the trivial character sheaf, then 
\begin{equation}\label{eq: equiv vs mon}
\mH_{\unip,\af}\cong (\Mod_\La)_u\otimes_{\Shv_{\mon}(\mS^{\natural}_k)}\mH_{\mon,\af}\otimes_{\Shv_{\mon}(\mS^{\natural}_k)}(\Mod_\La)_u .
\end{equation}

Now, let $F^\flat=\kappa(\!(u)\!)$ be the field of formal Laurent series over $\kappa$ in the variable $u$. It is well-known that the tame Galois groups of $F$ and $F^\flat$ are canonically identified, denoted by $\Ga$.
We let $\RG_F$ (resp. $\RG_{F^\flat}$) denote the groupoid of tamely ramified pinned connected reductive groups over $F$ (resp. over $F^\flat$). Then both categories are canonically equivalent to the groupoid of based root data equipped with continuous actions of $\Ga$. Therefore, associated to $(G,B,T,e)$ there is a pinned connected reductive group $(G^\flat,B^\flat,T^\flat,e^\flat)$ over $F^\flat$, unique up to a unique isomorphism. We may consider the corresponding unipotent affine Hecke category 
\[
\mH^\flat_{\unip,\af}:=\Shv\bigl(I^\flat_k\bs LG^\flat_k/I^\flat_k,\La\bigr)
\]
and the monodromic affine Hecke category
\[
\mH^\flat_{\mon,\af}:=\Shv\bigl((I_k^\flat,\mon)\bs LG_k^\flat/(I_k^\flat,\mon), \La\bigr),
\]
both equipped with monoidal structures and an automorphism $\sigma_*
$ induced by pushforward along the Frobenius endomorphism. The main result of this note is as follows.

\begin{thm}\label{intro: main thm}  
There are canonical equivalences of monoidal categories
\[
\mH_{\mon,\af}\cong \mH_{\mon,\af}^\flat, \quad \mH_{\unip,\af}\cong \mH_{\unip,\af}^\flat,
\]
matching (co)standard objects, and compatible with the $\sigma_*$-actions.
\end{thm}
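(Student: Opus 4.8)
\emph{Reduction to the affine flag variety.} Since the theorem concerns monoidal categories together with their (co)standard objects and the automorphism $\sigma_*$, the plan is to reduce it to a comparison of the affine flag varieties $\Fl_G:=LG_k/I_k$ and $\Fl_{G^\flat}:=LG^\flat_k/I^\flat_k$ as ind-schemes over $k$, stratified by the $I$-orbits, carrying their left and right $I_k$- (resp.\ $I^+_k$-) actions and the $I$-twisted convolution diagram. The essential observation is that all of this structure is manufactured, functorially in the based root datum of $G$ with its $\Ga$-action, out of \emph{parahoric-level} data: the Iwahori $\mI$ and the parahorics $\mP_J$ attached to proper subsets $J$ of the affine simple reflections, their positive loop groups, the finite flag varieties $P_J/I$, the Bott--Samelson resolutions $m_{\underline w}\colon\mathrm{BS}(\underline w)=P_{s_1}\times^I\cdots\times^I P_{s_n}/I\to\Fl$ attached to reduced words $\underline w$ for $w\in\wt W$, the convolution identifications $\mathrm{BS}(\underline w)\times^I\mathrm{BS}(\underline w')\cong\mathrm{BS}(\underline w\,\underline w')$ and the correspondence varieties $\mathrm{BS}(\underline w)\times_{\Fl}\mathrm{BS}(\underline w')$, and the action of the length-zero group $\Omega\subset\wt W$ permuting the strata through the local Dynkin diagram.

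\emph{Matching the parahoric-level data.} By Bruhat--Tits theory the apartment, the affine root system, the Iwahori--Weyl group $\wt W$, the length function, the $\Omega$-action, the local Dynkin diagram and the reductive quotients $\mP_J/\mP^+_J$ of the parahorics (as reductive $\kappa$-groups with residual $\Ga$-structure) depend only on the based root datum with its $\Ga$-action; hence they are canonically the same for $G/F$ and $G^\flat/F^\flat$. Since $I$ is the preimage of a Borel subgroup of $\mP_J/\mP^+_J$, the finite flag variety $P_J/I$ is literally the same $\kappa$-variety on both sides; and, assembling iteratively from these, the Bott--Samelson varieties $\mathrm{BS}(\underline w)$, their convolution maps, and the correspondence varieties $\mathrm{BS}(\underline w)\times_{\Fl}\mathrm{BS}(\underline w')$ — all of finite type — are canonically identified as $\kappa$-schemes equipped with compatible $\mS^{\natural}_k$-actions and Frobenius structures. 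What genuinely differs between the two worlds are the ind-schemes $LG$, $LG^\flat$ and the pro-unipotent radicals $\mP^+_J$ — built from Witt vectors $W_\mO(R)$ on the one hand and from $R\llbracket u\rrbracket$ on the other — but this discrepancy lives entirely inside pro-unipotent group schemes, under which equivariance is a property introducing no new morphisms (the relevant \'etale $\La$-cohomology being $\bA^1$-invariant for $\ell\neq p$); so it is invisible to the sheaf theory and does not disturb the identifications just made.

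\emph{Assembling the equivalence.} The monodromic affine Hecke category $\mH_{\mon,\af}$ is generated, as a $\La$-linear monoidal category under colimits, by the Bott--Samelson objects $(m_{\underline w})_!\,\underline\La$ (suitably shifted), equivalently by the standards $\Delta_s$, costandards $\nabla_s$ and length-zero objects $\delta_\omega$; and both its morphism spaces and its convolution product are computed from the tower $\{\mathrm{BS}(\underline w)\}$, the $\mS^{\natural}_k$-actions, and the correspondences $\mathrm{BS}(\underline w)\times_{\Fl}\mathrm{BS}(\underline w')$ of the previous paragraph. Transporting sheaves along the canonical identifications of this finite-type data will then produce an equivalence of monoidal categories $\mH_{\mon,\af}\cong\mH^\flat_{\mon,\af}$ carrying $\Delta_w$ to $\Delta^\flat_w$ and $\nabla_w$ to $\nabla^\flat_w$; monoidality is checked on generators and reduces to the rank-one relation $\Delta_s\star\nabla_s\cong\delta_e$ and the braid relations, which take place on the Bott--Samelson varieties of sub-data of semisimple rank $\le 2$ and hence hold identically on both sides. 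Compatibility with $\sigma_*$ is then automatic, since every scheme, map and sheaf involved is defined over $\kappa$ with its standard Frobenius and the equivalence respects these Frobenius structures. Finally $\mH_{\unip,\af}\cong\mH^\flat_{\unip,\af}$ follows by applying $(\Mod_\La)_u\otimes_{\Shv_{\mon}(\mS^{\natural}_k)}(-)\otimes_{\Shv_{\mon}(\mS^{\natural}_k)}(\Mod_\La)_u$ to the monodromic equivalence, using \eqref{eq: equiv vs mon} and the identification $\mS^{\natural}_k\cong\mS^{\natural,\flat}_k$.

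\emph{Main difficulty.} The crux lies in the reconstruction input of the last step: one must show that $\mH_{\mon,\af}$ — as a monoidal category with its (co)standards and its $\sigma_*$-action — is genuinely recovered from the Bott--Samelson tower together with the correspondence varieties $\mathrm{BS}(\underline w)\times_{\Fl}\mathrm{BS}(\underline w')$, and that these correspondences, like the $\mathrm{BS}(\underline w)$ themselves, are iterated parahoric-level fibrations whose Witt-vector and formal-disc incarnations agree after $\Shv(-,\La)$ (one may instead phrase this as a generators-and-relations presentation of $\mH_{\af}$, the generators and relations being parahoric-level and hence common to $G$ and $G^\flat$). This is precisely where the argument parts ways with the deformation-theoretic method of \cite{Ban23}: no interpolating family over a Fargues--Fontaine-type base is needed, only the insensitivity of \'etale $\La$-sheaf theory to the pro-unipotent discrepancy between the two loop groups. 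Once that is in place, matching $(\Delta_w,\nabla_w)$, the convolution and $\sigma_*$ is bookkeeping, and the monodromic refinement over the unipotent one costs only the (again parahoric-level) torus $\mS^\natural$ together with its character sheaves.
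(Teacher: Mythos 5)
Your proposal captures the central insight of the paper correctly: the discrepancy between the mixed-characteristic and equal-characteristic loop groups is concentrated in pro-unipotent group schemes, and since $\ell$-adic sheaf theory ($\ell\neq p$) is insensitive to pro-unipotent gerbes, the two affine Hecke categories should be built from the same parahoric-level data. This is indeed the paper's strategy, and your observation that what "genuinely differs" — Witt vectors versus $R\llbracket u\rrbracket$ — lives inside pro-unipotent kernels is precisely the content of the gerbe argument the paper uses to prove Lemma~\ref{lem: equiv of finite Hecke categories}.

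However, you leave the "main difficulty" you yourself flag entirely unresolved, and this is where the actual mathematical content of the proof lies. Three concrete gaps:

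\emph{(1) Reconstruction from parahoric data.} You assert, without proof, that $\mH_{\mon,\af}$ with its monoidal structure is "genuinely recovered from the Bott–Samelson tower together with the correspondence varieties," or equivalently admits a generators-and-relations presentation in terms of parahoric-level data. This is a theorem, not an observation: the paper invokes the Tao–Travkin theorem (\cite{TT20}) that the neutral block of the affine Hecke category is the colimit, \emph{in} $\Alg(\lincat_\La)$, of the parahoric Hecke categories $\Shv(I_k\bs L^+\breve\mP_{\breve\bff}/I_k)$ indexed by facets $\breve\bff$ in the closure of the alcove. Without citing or establishing something of this strength, your "monoidality is checked on generators and reduces to the rank-one and braid relations" does not control the higher coherences needed to produce a monoidal equivalence of stable $\infty$-categories, and the passage from matching finite-type data to matching the full ind-category is not a formality.

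\emph{(2) Bott–Samelson varieties as schemes.} Your claim that the $\mathrm{BS}(\underline w)=P_{s_1}\times^I\cdots\times^I P_{s_n}/I$ are "canonically identified as $\kappa$-schemes" between the two settings is not obviously true as stated: the twisted products are formed using $I$ and $I^\flat$, which are genuinely different pro-algebraic groups, so one must argue that the relevant $I$-actions factor through common finite-dimensional quotients. The paper sidesteps this by never comparing the BS varieties as bare schemes; it compares the Hecke \emph{stacks} $I_k\bs L^+\breve\mP_{\breve\bff}/I_k$ and $I^\flat_k\bs L^+\breve\mP^\flat_{\breve\bff}/I^\flat_k$, both of which map by pro-unipotent gerbes to the common finite Hecke stack $B_{\breve\bff}\bs M_{\breve\bff}/B_{\breve\bff}$. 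This is the clean way to make your insight rigorous.

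\emph{(3) Non-neutral components and canonicity in the monodromic case.} Tao–Travkin only yields the neutral block $\mH_{\mon,\af}^\circ$. Passing to the full category requires gluing in the component group $\Omega$ (more precisely its extension $\wt\Omega$ by $\mS_k(k)$), and in the monodromic setting the cofree monodromic (co)standards $\delta_{\tilde\omega}$ are only defined up to non-canonical isomorphism, so one cannot simply "transport" them. The paper resolves both issues at once by passing to the ordinary additive category of cofree monodromic tilting sheaves, forming a \emph{contracted semidirect product} $\mT il_{\mon,\af}^{\circ}\rtimes^{\wt\Omega^\circ}\wt\Omega$, and then reconstructing the full $\infty$-category from its tilting subcategory via \cite[Theorem 5.28]{DLYZ25}. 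Your proposal mentions the length-zero objects $\delta_\omega$ but does not address how to make the resulting equivalence canonical, which is where the tilting machinery is essential.

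In short: right idea, same general shape as the paper (common parahoric-level data, pro-unipotence invisible to $\ell$-adic sheaves), but the three technical pillars — the Tao–Travkin colimit theorem, the gerbe argument at the level of stacks, and the tilting/semidirect-product mechanism for non-simply-connected groups and monodromic canonicity — are all missing from the argument.
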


\begin{rem}
\begin{enumerate}
\item  It follows immediately that the equivalences in the theorem match the perverse $t$-structures (as sheaves on the affine flag varieties and its enhanced version $LG_k/I^+_k$) in mixed and equal characteristic.
\item  It is standard to deduce equivalences between parahoric affine Hecke categories in mixed and equal characteristic, matching the corresponding perverse $t$-structures. See \cite[Theorem 8.26]{DLYZ25}. In particular, when $G$ is unramified, the Satake category, a.k.a. the category of $L^+G$-equivariant perverse sheaves on the affine Grassmannain $\Gr_G$ of $G$, and its equal characteristic counterpart, are canonically matched as \emph{abelian monoidal} categories. In addition, the monoidal structures on the hypercohomology functor $H^*(\Gr_G,-)$ and $H^*(\Gr_{G^\flat},-)$ are also canonically matched. As explained in \cite[Proposition 2.21]{Zhu17}, the existence of commutativity constraints compatible with the monoidal structure on the hypercohomology functor is a property (rather than an additional structure) of the Satake category. We thus deduce that the Satake category in mixed and equal characteristic are equivalent as abelian symmetric monoidal categories. This reproves the main result of \cite{Zhu17} by a different method.
\end{enumerate}
\end{rem}

\medskip

\noindent \bf Acknowledgment. \rm The result of this note was announcement by one of us in conference talks a few years ago. We apologize for the delay of its appearance. 
The work of Z.Y. is partially supported by a Simons Investigator grant. The work of X.Z. is partially supported by NSF grant under DMS-2200940.

\section{Group schemes}\label{Sec: unip}
\subsection{Mixed characteristic}
Let $(G,B,T,e)$ be a pinned tamely ramified quasi-split connected reductive group over $F$. We set up a few notations that will be needed in the sequel. Suppose $G$ splits over a tamely ramified finite Galois extension $\tilde F/F$ (in a fixed algebraic closure $\overline{\breve F}$ of $\breve F$).   
Let $S\subset T$ denote the $F$-rational, maximally $\breve F$-split subtorus and let $A\subset S$ be the maximal $F$-split torus. Let $\mA\subset \mS\subset\mT$ be the (unique) Iwahori group schemes of $A\subset S\subset T$ over $\mO$. We let $\widetilde{W}=N_G(S)(\breve F)/\mS(\breve\mO)$ denote the Iwahori-Weyl group of $G_{\breve F}$. Let $W_\af\subset \widetilde W$ be the affine Weyl group of $G$. Recall that it can be defined as Iwahori-Weyl group of the simply-connected cover $G_{\s}$ of the derived subgroup of $G$ (associated to the preimage $T_\s$ of $T$ in $G_\s$). It is known that 
\begin{equation*}\label{eq-Omega-dual-group}
 \widetilde W/W_\af \cong \pi_1(G)_{\Gal(\tilde F\breve F/\breve F)}\cong \pi_0(LG_k).
\end{equation*}
Here $\pi_1(G)$ is the algebraic fundamental group of $G$, equipped with a continuous $\Gal(\ov F/F)$-action.

We let $\scrB(G, \breve F)$ denote the (reduced) Bruhat-Tits building of $G_{\breve F}$, which is a contractible (poly)simplicial complex equipped with an action of $G(\breve F)$ by simplicial automorphisms. We shall call the \emph{interior} of a cell in this simplicial complex a facet. Let $\scrA(G,S,\breve F)\subset \scrB(G,\breve F)$ denote the apartment corresponding to $S_{\breve F}$.
It is $N_G(S)(\breve F)$-stable, and the action of $N_G(S)(\breve F)$ on it factors through an action of $\widetilde{W}$ by affine transformations. Note that the $\sharp \kappa$-Frobenius $\sigma\in \Aut(\breve F/F)$ acts on everything. In particular, $\scrA(G, S, \breve F)^\sigma=\scrA(G,A,F)$ is the apartment associated to $A$ in the building $\scrB(G,F)=\scrB(G,\breve F)^\sigma$. 

The pinning $(B,T,e)$ determines a special vertex in $v_0\in \scrA(G,S,\breve F)$ and the corresponding parahoric group scheme $\mP_{v_0}$ as follows. Let $H$ be a split Chevalley group scheme over $\bZ$ such that $H_{\breve F}\cong G_{\breve F}$. Choose a pinning $(B_H,T_H,e_H)$ of $H$, and let $\Xi_H:=\Aut(H,B_H,T_H,e_H)$, viewed as a finite discrete group. 
Then 
\begin{equation}\label{eq: xiG}
\xi_G:= \mathrm{Isom}\bigl((H,B_H,T_H,e_H)_F, (G,B,T,e)\bigr)
\end{equation} 
is a $\Xi_H$-torsor over $\Spec F$ that has a $\tilde F$-point. Choosing such a point $\eta$ defines a homomorphism $\xi_{G,\eta}:  \Gal(\tilde F/F)\to \Xi_H$ such that the automorphism $\id_{G}\times \ga$ on $G_{\tilde F}=G\otimes_{F}\tilde F$ corresponds to the automorphism $\xi_{G,\eta}(\ga)\times \ga$ on $H_{\tilde F}=H\otimes_{\bZ}\tilde F$. 
Let $\tilde\mO$ be the ring of integers of $\tilde F$. Let
\[
\mP_{v_0}=(\Res_{\tilde\mO/\mO}H_{\tilde\mO})^{\Gal(\tilde F/F),\circ}.
\]
Here and below, $(-)^\circ$ denotes taking the neutral connected component. Via $\eta$, $\mP_{v_0}$ can be regarded as an integral model of $G$. As changing $\eta$ amounts to conjugating $\xi_{G,\eta}$ by elements in $\Xi_H$,  we see that $\mP_{v_0}$ as an integral model of $G$ is independent of the choice of $\eta$. This is the desired special parahoric of $G$.
Putting differently, the reductive group $H_{\mO}$ over $\mO$ defines a hyperspecial point $v_0$ in the apartment $\scrA(H,T_H,\breve F)$, fixed by the action of $\Xi_H$. The choice of $\eta$ identifies $\scrA(G,A,F)$ with $\scrA(H,T_H, \tilde F)^{\Gal(\tilde F/F)}$, which allows us to view $v_0$ as a point in $\scrA(G,A,F)\subset \scrA(G,S,\breve F)$.

Let $S_{\ad}$ be the image of $S$ in the adjoint group $G_\ad$ of $G$, so that $\scrA(G,S,\breve F)$ is a torsor under the real vector space $\xcoch(S_\ad)_\bR$ spanned by $\breve F$-rational cocharacters of $S_\ad$. We identify $\scrA(G,S,\breve F)$ with $\xcoch(S_\ad)_\bR$ so that $v_{0}$ corresponds to $0$. We then let $\breve\bfa$ be the (unique) alcove such that $v_0$ is contained in the closure of $\breve\bfa$ and $\breve\bfa$ is contained in the dominant Weyl chamber of $\xcoch(S_\ad)_\bR$ determined by the Borel subgroup $B$. Then the group $\widetilde W$ acquires a length function.
Let $\Omega\subset \widetilde W$ be the corresponding subgroup of length zero elements, or equivalently the stabilizer of the alcove $\breve\bfa$ under $\wt W$. Then 
\begin{equation}\label{eq-Iwahori-Weyl-semiproduct}
   \widetilde W=W_\af\rtimes\Omega. 
\end{equation}
In particular, $\Omega\cong \pi_1(G)_{\Gal(\tilde F\breve F/\breve F)}\cong \pi_0(LG)$.
Note that  $\mathbf{a}=\breve\bfa\cap \scrA(G,A,F)$ is an alcove of $\scrA(G,A,F)$, and the decomposition \eqref{eq-Iwahori-Weyl-semiproduct} is preserved under the action of $\sigma$.

For a facet $\breve\bff$ in $\scrA(G,S,\breve F)$, let $\breve\mP_{\breve\bff}$ be the corresponding parahoric group scheme over $\breve\mO$. If $\breve\bff$ is $\sigma$-stable, then $\breve\mP_{\breve\bff}$ descends to a group scheme $\mP_{\breve\bff}$ over $\mO$. Let $M_{\br\bff}$ denote the reductive quotient of the special fiber $\breve\mP_{\breve\bff}\otimes_{\br\mO}k$, which is a connected reductive group over $k$.

In particular, the alcove  $\br\bfa$ corresponds to an Iwahori group scheme $\br\mI$ over $\br\mO$, which descends to an Iwahori group scheme $\mI$ of $G$ over $\mO$. Let $I=L^{+}_{p}\mI$ be the $p$-adic positive loop group of $\mI$, which is a pro-algebraic group over $\kappa$. Then the base change $I_{k}$ is $L^{+}\br\mI$. Let $I^{+}$ (resp. $I^{+}_{k}$)be the pro-unipotent radical of $I$ (resp. $I_{k}$). We note that the inclusion $\mS\to \mI$ induces an isomorphism $\mS_\kappa\cong I/I^+$. The group $I/I^+$ was denoted by $\mS^{\natural}_\kappa$ in the introduction, but from now on we simply denote it by $\mS_\kappa$.

\subsection{Relating mixed and equal characteristics}\label{ss:mix eq}
Next we recall the passage from $G$ to $G^\flat$ as in \cite[\S 3]{PZ17}. Recall $F^{\flat}=\kappa(\!(u)\!)$.
Consider the ring $\mO[u,u^{-1}]$. There are two specialization maps 
\begin{equation}\label{eq: two sp map}
\mO[u,u^{-1}]\to F, \ u\mapsto \varpi,\quad\quad \mO[u,u^{-1}]\to F^\flat, \ \mO\to \kappa, u\mapsto u
\end{equation}
which induce equivalences between the following categories: (1) finite \'etale extension of $\mO[u^{\pm 1}]$; (2) finite \'etale extension of $F$ that splits over a tamely ramified extension of $F$; and (3) finite \'etale extension of $F^{\flat}$ 
that splits over a tamely ramified extension.
Thus the $\Xi_H$-torsor $\xi_G$ in \eqref{eq: xiG} can be (uniquely) extended to a $\Xi_H$-torsor $\underline \xi_G$ on $\mO[u^{\pm 1}]$. We let
\[
(\underline G,\underline B,\underline T,\underline e)=(H, B_H, T_H, e_H)\times^{\Xi_H}\underline \xi_G.
\]
Then $\un{G}$ is a connected reductive group over $\mO[u,u^{-1}]$ equipped with a pinning $(\underline B, \underline T, \underline e)$, together with an isomorphism
\[
(\underline G, \underline B, \underline T, \underline e)\otimes_{\mO[u^{\pm 1}],u\mapsto \varpi} F\cong (G,B,T,e).
\] 
There are subtori $\underline A\subset \underline S$ of $\underline T$, where $\underline A$ is split and $\underline S$ splits over $\breve\mO[u^{\pm 1}]$ such that $(\underline A,\underline S)\otimes_{\mO[u^{\pm 1}]} F=(A,S)$. 

We let 
\[
(G^\flat,B^\flat, T^\flat, e^\flat)=(\underline G, \underline B, \underline T, \underline e)\otimes_{\mO[u^{\pm1}]}F^\flat,
\] 
which is a connected reductive group over $F^\flat$ equipped with a pinning. We also have the corresponding subtori $A^\flat\subset S^\flat\subset T^\flat$ via base change of $\un A\subset \un S\subset \un T$, and the Iwahori Weyl group $\widetilde{W}^\flat$.

Let $\br F^{\flat}=k(\!(u)\!)$ with valuation ring $\br \mO^{\flat}=k[[u]]$. Let $\sigma^{\flat}\in \Aut(\br F^{\flat}/F^{\flat})$ be the $\sharp\kappa$-Frobenius. By \cite[\S 4]{PZ17}, we have a canonical identification of the apartment
\begin{equation*}\label{eq:identifying apartment}
\scrA(G, S, \breve F)=\scrA(G^\flat, S^\flat, \br F^{\flat})
\end{equation*}
as simplicial complexes. Indeed, both are identified with $\scrA(H,T_H,\tilde F\breve F)^{\Gal(\tilde F\breve F/\breve F)}$. We shall simply denote them as $\scrA$. 
In addition, this identification is compatible with the action of Iwahori-Weyl group $\widetilde W=\widetilde W^\flat$ and the action of the Frobenius $\sigma$ and $\sigma^\flat$. Below we denote $\wt W^{\flat}$ and $\sigma^{\flat}$ simply by $\wt W$ and $\sigma$.

We denote by $\br\mP^{\flat}_{\br\bff}, \br\mI^{\flat}, \mI^{\flat}, I^{\flat}_{k} ,I^{\flat}, I^{\flat,+}_{k}$ and $I^{\flat, +}$ the counterparts of $\br\mP_{\br\bff}, \br\mI,\mI, I_{k}, I, I^{+}_{k}$ and $I^{+}$ for $G^{\flat}$.

By \cite[Theorem 4.1]{PZ17}, for every facet $\breve\bff\subset \scrA$, there is a unique smooth affine group scheme $\underline{\breve\mP}_{\breve\bff}$ over $\breve\mO[u]$ such that the specializations
\[
\breve\mP_{\breve\bff}:=\underline{\breve\mP}_{\breve\bff}\otimes_{\breve\mO[u], u\mapsto \varpi}\breve\mO,\quad\quad \breve\mP^\flat_{\breve\bff}:=\underline{\breve\mP}_{\breve\bff}\otimes_{\br\mO[u]}\br\mO^{\flat}
\]
are the parahoric group schemes over $\breve\mO$ and $\br\mO^{\flat}$ corresponding to $\br\bff$. If in addition $\breve\bff$ is $\sigma$-stable, then $\underline{\breve\mP}_{\breve\bff}$ descends to $\mO[u]$ and specializes to the corresponding parahoric group scheme of $G$ (resp. $G^\flat$) over $\mO$ (resp. $\mO^{\flat}$). 

We will need the following lemma.
\begin{lem}\label{lem: map between parahoric}
If $\breve\bff$ is contained in the closure of $\breve\bff'$, then there is a unique morphism $\underline{\breve\mP}_{\breve\bff'}\to \underline{\breve\mP}_{\breve\bff}$ of group schemes over $\br\mO[u]$ that restricts to the identity map over $\breve\mO[u^{\pm1}]$ and specializes to the corresponding maps between parahoric group schemes for $G$ and $G^{\flat}$ respectively.
\end{lem}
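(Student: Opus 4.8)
The plan is to dispose of uniqueness first --- which, as I explain, also makes the compatibility with the specialized parahoric maps automatic --- and then to construct the morphism using the affine-root-group (``open cell'') description of the group schemes $\underline{\breve\mP}_{\breve\bff}$ furnished by the proof of \cite[Theorem 4.1]{PZ17}. For uniqueness: the source $\underline{\breve\mP}_{\breve\bff'}$ is smooth, hence flat, over the regular noetherian ring $\breve\mO[u]$, and $\breve\mO[u]\hookrightarrow\breve\mO[u^{\pm1}]$ is the localization at the nonzerodivisor $u$, so $\underline{\breve\mP}_{\breve\bff'}\otimes_{\breve\mO[u]}\breve\mO[u^{\pm1}]$ is scheme-theoretically dense in $\underline{\breve\mP}_{\breve\bff'}$; since $\underline{\breve\mP}_{\breve\bff}$ is affine over $\breve\mO[u]$, hence separated, two morphisms agreeing over $\breve\mO[u^{\pm1}]$ coincide. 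The identical argument over the discrete valuation rings $\breve\mO$ and $\breve\mO^\flat$ shows moreover that the standard Bruhat--Tits transition maps $\breve\mP_{\breve\bff'}\to\breve\mP_{\breve\bff}$ and $\breve\mP^\flat_{\breve\bff'}\to\breve\mP^\flat_{\breve\bff}$ are the unique morphisms of group schemes restricting to $\id$ on the common reductive generic fibre; hence any morphism $\underline{\breve\mP}_{\breve\bff'}\to\underline{\breve\mP}_{\breve\bff}$ over $\breve\mO[u]$ which is the identity over $\breve\mO[u^{\pm1}]$ specializes to them automatically. It therefore suffices to produce one such morphism.

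\emph{Existence.} Recall from \cite[\S 4]{PZ17} that, after passing if necessary to a tame Kummer cover in $u$ over which $\underline S$ splits, $\underline{\breve\mP}_{\breve\bff}$ is generated by a fixed torus model $\underline{\mathcal T}$, independent of the facet (the connected N\'eron-type model attached to the apartment), together with affine-root-group schemes $\underline{\mathcal U}_{a,\breve\bff}$, which are closed vector subgroup schemes of the respective generic root groups of $\underline G$; these organize into an open dense ``big cell'' $\underline{\mathcal U}^-_{\breve\bff}\times\underline{\mathcal T}\times\underline{\mathcal U}^+_{\breve\bff}\hookrightarrow\underline{\breve\mP}_{\breve\bff}$, with $\underline{\mathcal U}^{\pm}_{\breve\bff}$ the products of the affine root groups of positive (resp.\ negative) sign. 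When $\breve\bff\subset\overline{\breve\bff'}$, every affine root is assigned by $\breve\bff'$ a level at least as large as the one assigned by $\breve\bff$, so $\underline{\mathcal U}_{a,\breve\bff'}\subseteq\underline{\mathcal U}_{a,\breve\bff}$ inside the common generic root group, while $\underline{\mathcal T}$ is literally the same for $\breve\bff$ and $\breve\bff'$. Together with $\id_{\underline{\mathcal T}}$ these inclusions give a morphism of big cells, hence a morphism between the big-cell opens of $\underline{\breve\mP}_{\breve\bff'}$ and $\underline{\breve\mP}_{\breve\bff}$ restricting to the identity over $\breve\mO[u^{\pm1}]$. One extends this to a morphism of group schemes by the usual argument: cover $\underline{\breve\mP}_{\breve\bff'}$ by left translates of the big cell and glue the induced maps, the group-homomorphism identities being imposed on the reductive generic fibre $\underline G_{\breve\mO[u^{\pm1}]}$ and propagated to all of $\underline{\breve\mP}_{\breve\bff'}$ by scheme-theoretic density (the source is flat, the target separated). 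Finally one descends the resulting morphism from the Kummer cover back to $\breve\mO[u]$, using the uniqueness already established.

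\emph{Main obstacle.} The delicate point is that the open-cell description and the big-cell generation of $\underline{\breve\mP}_{\breve\bff}$ must be made precise \emph{relatively over the two-dimensional base} $\breve\mO[u]$ --- along the divisor $\{u=0\}$ --- rather than only over the one-dimensional specializations that are the primary concern of \cite{PZ17}; in particular one must control the models $\underline{\mathcal U}_{a,\breve\bff}$ and their nesting for $\breve\bff\subset\overline{\breve\bff'}$ over $\breve\mO[u]$. An alternative route, which avoids revisiting the generic fibre, is to realize $\underline{\breve\mP}_{\breve\bff'}$ as an iterated dilatation (affine blow-up) of $\underline{\breve\mP}_{\breve\bff}$ along a chain of smooth closed subgroup schemes of its fibre over the Cartier divisor $\{u=0\}$, the chain being dictated by the position of $\breve\bff$ in the closure of $\breve\bff'$; the desired morphism is then the tautological one $\mathrm{Dil}(-)\to(-)$, and the only input needed is the group-scheme structure of $\underline{\breve\mP}_{\breve\bff}$ near $u=0$, which is implicit in \cite{PZ17}. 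In either case the construction is routine once that structure is recorded, and the uniqueness from the first step guarantees the two approaches produce the same morphism and that it enjoys all the asserted properties.
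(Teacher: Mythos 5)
Your uniqueness argument (flatness of the source, hence scheme-theoretic density of the generic fibre, plus separatedness of the target) is clean and correct, and your observation that this makes the specialization compatibility automatic is exactly the right way to dispose of that part of the statement; the paper treats uniqueness implicitly in the same spirit. The existence part, however, takes a genuinely different and considerably more laborious route than the paper. You propose to re-run the big-cell / affine-root-group construction over the two-dimensional base $\breve\mO[u]$ (after a Kummer cover and subsequent descent), or alternatively an iterated dilatation along $\{u=0\}$ --- and you honestly flag that this requires controlling structure that is only ``implicit'' in \cite{PZ17}, which is a real gap in the write-up as it stands. The paper instead uses an observation that sidesteps all of this: by \cite[\S 4]{PZ17}, the base change $\underline{\breve\mP}_{\breve\bff}\otimes_{\breve\mO[u]}\breve F[[u]]$ is itself a parahoric group scheme of $G_{\breve F(\!(u)\!)}$, so classical Bruhat--Tits theory supplies the transition map there; combined with the identity over $\breve\mO[u^{\pm1}]$, this produces the morphism over $\Spec\breve\mO[u]\setminus\{s\}$ with $s=(\varpi,u)$ the unique codimension-two point, and one then extends over $s$ using the standard fact that a morphism from a normal (here smooth, hence regular) scheme to an \emph{affine} scheme defined outside codimension $\geq2$ extends uniquely. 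This codimension-two extension trick is the one idea your argument is missing, and it is precisely what lets the paper avoid revisiting the root-group structure of $\underline{\breve\mP}_{\breve\bff}$ along the divisor $\{u=0\}$; your approach, while plausible, would require writing out that structure in detail.
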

\begin{proof} We need the following property of the group scheme $\underline{\breve\mP}_{\breve \bff}$ established in \cite[\S 4]{PZ17}. The apartment $\scrA(G, S, \breve F(\!(u)\!))$ is also canonically identified with $\scrA$. Here we regard $\breve F(\!(u)\!)$ as a local field with residue field $\breve F$. Then $\underline{\breve\mP}_{\breve\bff}\otimes_{\breve\mO[u]}\breve F[[u]]$ is the parahoric group scheme of $G_{\breve F(\!(u)\!)}$ associated to $\breve\bff$. It follows from the classical Bruhat-Tits theory that the identity map of $\underline G$ uniquely extends to a homomorphism $\iota: \underline{\breve\mP}_{\breve\bff'}|_{\Spec \breve\mO[u]\setminus\{s\}}\to \underline{\breve\mP}_{\breve\bff}|_{\Spec \breve\mO[u]\setminus\{s\}}$, where $s$ is the closed point of $\Spec \breve\mO[u]$ defined by the maximal ideal $(\varpi, u)$. Using the fact that  $\underline{\breve\mP}_{\breve \bff'}$ and  $\underline{\breve\mP}_{\breve \bff}$ are affine smooth, $\iota$ extends to uniquely over $\Spec \mO[u]$ as desired. 
\end{proof}

Let $L_p^+\breve\mP_{\breve\bff}$ be the $p$-adic positive loop group of $\breve\mP_{\breve\bff}$ and $L^+\breve\mP_{\breve\bff}^\flat$ the equal characteristic positive loop group. They are pro-algebraic groups over $k$. We have canonical maps
\[
L_p^+\breve\mP_{\breve\bff}\twoheadrightarrow \underline{\breve\mP}_{\breve\bff}\otimes_{\breve\mO[u],u=\varpi=0}k\twoheadleftarrow L^+\breve\mP_{\breve\bff}^\flat.
\]
It follows that the reductive quotients of both $L_p^+\breve\mP_{\breve\bff}$ and $L^+\breve\mP_{\breve\bff}^\flat$ are identified with $M_{\breve\bff}$. If in addition $\breve\bff$ is $\sigma$-stable, the above diagram descends to $\kappa$.

Note that if $\breve\bff$ is contained in the closure of $\breve\bff'$, then by Lemma \ref{lem: map between parahoric} there is the following commutative diagram which descends to $\kappa$ if $\breve\bff$ is $\sigma$-stable
\begin{equation*}\label{eq: map par mixed vs equal}
\xymatrix{
L_p^+\breve\mP_{\breve\bff'}\ar[r]\ar[d] & M_{\breve\bff'}\ar[d] & \ar[l] L^+\breve\mP_{\breve\bff'}^\flat\ar[d]\\
L_p^+\breve\mP_{\breve\bff}\ar[r] & M_{\breve\bff} & \ar[l] L^+\breve\mP_{\breve\bff}^\flat.
}
\end{equation*}
The right and the middle vertical maps of the above diagram are closed embeddings, while the left one becomes a closed embedding after perfection. As our main objects to study are $\ell$-adic sheaves on these spaces, we pass to perfection in the sequel. However, by abuse of notations, we shall still use $L^+\breve\mP_{\breve\bff}^\flat$ and $M_{\breve\bff}$ to denote their perfection. We will also use $L^+\breve\mP_{\breve\bff}$ to denote the perfection of $L^+_{p}\breve\mP_{\breve\bff}$.

In particular, if $\breve\bff$ is in the closure of the chosen alcove $\breve\bfa$, then $I_k\subset L^+\breve\mP_{\breve\bff}$ and $I^\flat_k\subset L^+\breve\mP^\flat_{\breve\bff}$. Both $I_k$ and $I^\flat_k$ have the same image $B_{\breve\bff}$ in $M_{\breve\bff}$, which is a Borel subgroup. Similarly, both $I^+_k$ and $I^{\flat,+}_k$ have the same image $U_{\breve\bff}$ in $M_{\breve\bff}$, which is the unipotent radical of $B_{\breve\bff}$. We have the identification of tori $I_k/I^+_k=B_{\breve\bff}/U_{\breve\bff}=I^{\flat}_k/I^{\flat,+}_k$.

\subsection{Bruhat-Tits group schemes}
For later purpose, we need  Bruhat-Tits type group schemes over $\breve\mO[u]$ containing $\underline{\breve\mP}_{\breve\bff}$ as open subgroup schemes. In fact, we just need a very special case when $\breve\bff=\breve\bfa$. But we will present a general construction, which might be of independent interest.

We let $\scrB^{\ext}(G,\breve F)=\scrB(G,\breve F)\times \xcoch(S_\ab)_\bR$ be the extended Bruhat-Tits building, and identify $\scrB(G,\breve F)$ with $\scrB(G,\breve F)\times \{0\}\subset \scrB^{\ext}(G,\breve F)$.

For $x\in \scrA(G,S,\breve F)$, let $\breve\mG_x$ denote the corresponding {\em Bruhat-Tits group scheme} of $x$: it is the unique smooth group scheme over $\breve\mO$ such that $\breve\mG_x(\breve\mO)$ is the stabilizer of $x$ under the action of $G(\breve F)$.
Then $\breve\mG_x^{\circ}=\breve\mP_{\breve\bff}$ where $\breve\bff$ is the facet containing $x$. If $x$ is in the closure of the alcove $\breve\bfa$, then we have a canonical isomorphism
\begin{equation}\label{eq:omegax}
\pi_0(\breve\mG_x\otimes_{\breve\mO} k)\cong\Omega_x:=\{\omega\in\Omega\mid \omega x=x\}.
\end{equation}
If $\pi_0(\breve\mG_x)$ is finite, then $\breve\mG_x$ is affine.
We shall write $M_x$ for the quotient of $\breve\mG_x\otimes_{\breve\mO}k$ by the unipotent radical of its neutral connected component, which is not necessarily connected and its neutral component is $M_{\breve\bff}$ for the facet $\br\bff$ containing $x$. If $x$ is fixed by $\sigma$, then $\breve\mG_x$ descends to $\mO$ and $M_x$ descends to $\kappa$.

Similarly, for $G^{\flat}$, we have the  Bruhat-Tits group schemes $\br\mG^{\flat}_{x}$ over $\br\mO^{\flat}$, and the reductive quotient $M^{\flat}_{x}$ of its special fiber.

We will need constructions parallel to \S\ref{ss:mix eq} for Bruhat-Tits group schemes. 
As argued in \cite[Proposition 3.3]{DLYZ25}, there exists a set-theoretic map 
\[
\widetilde{W}\to N_{\underline G}(\underline S)(\breve\mO[u^{\pm 1}])
\]
such that its two specializations in \eqref{eq: two sp map} give set-theoretic sections of the natural maps $N_G(S)(\breve F)\to \widetilde W$ and $N_{G^\flat}(S^\flat)(\br F^{\flat})\to \widetilde W$. In particular, we can fix a set of representatives $\{v_\omega\}_{\omega\in\Omega}\subset N_{\underline G}(\underline S)(\breve\mO[u^{\pm 1}])$  of $\Omega$, whose two specializations  give representatives of $\Omega$ in $N_G(S)(\breve F)$ and in $N_{G^\flat}(S^\flat)(\br F^{\flat})$ respectively. We will always assume that $v_1=1$.

Now we construct the Bruhat-Tits group scheme over $\breve\mO[u]$, partially generalizing \cite[Theorem 4.1]{PZ17}.

\begin{prop}
Let $x\in \overline{\breve\bfa}$ be a point. Then there is a unique smooth group scheme $\underline {\breve \mG}_x$ over $\breve\mO[u]$ extending $\underline G$, such that its two specializations as in \eqref{eq: two sp map} give the corresponding Bruhat-Tits group schemes $\breve\mG_x$ and $\breve\mG_x^\flat$ respectively. If $\Omega_x$ is finite, then $\underline{\breve\mG}_x$ is affine.
\end{prop}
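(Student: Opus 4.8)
The plan is to take $\underline{\breve\mG}_x^\circ$ to be the group scheme $\underline{\breve\mP}_{\breve\bff}$ of \cite[Theorem 4.1]{PZ17}, where $\breve\bff\subseteq\overline{\breve\bfa}$ is the facet containing $x$, and to build the remaining components out of the chosen representatives $v_\omega$, $\omega\in\Omega_x$. Since $\omega x=x$ and $\widetilde W$ acts simplicially on $\scrA$, each such $\omega$ stabilizes $\breve\bff$, so conjugation by $v_\omega$ is an automorphism of $\underline G$ over $\breve\mO[u^{\pm1}]$ whose two specializations in \eqref{eq: two sp map} normalize $\breve\mP_{\breve\bff}$ and $\breve\mP^\flat_{\breve\bff}$; by the uniqueness clause of \cite[Theorem 4.1]{PZ17} it extends uniquely to an automorphism $\gamma_\omega$ of $\underline{\breve\mP}_{\breve\bff}$ over $\breve\mO[u]$. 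I would then show the discrepancy cocycle $z_{\omega,\omega'}:=v_\omega v_{\omega'}v_{\omega\omega'}^{-1}$ lands in $\underline{\breve\mP}_{\breve\bff}(\breve\mO[u])$: it maps to $1$ in $\widetilde W$ and in $W=N_{\underline G}(\underline S)/\underline S$, hence lies in $\underline S(\breve\mO[u^{\pm1}])$, and since $\breve\mO[u^{\pm1}]^\times=\breve\mO^\times\cdot u^{\bZ}$ while its equal-characteristic specialization lies in $\breve\mS^\flat(\breve\mO^\flat)$ (vanishing $u$-valuation in every character), its $u$-part is trivial, so $z_{\omega,\omega'}\in\underline{\breve\mS}(\breve\mO[u])\subset\underline{\breve\mP}_{\breve\bff}(\breve\mO[u])$, where $\underline{\breve\mS}\subset\underline{\breve\mP}_{\breve\bff}$ is the split-torus model of $\underline S$. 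The pair $(\gamma_\bullet,z_{\bullet,\bullet})$ inherits the usual cocycle identities from the group law of $N_{\underline G}(\underline S)(\breve\mO[u^{\pm1}])$.

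Next I would define $\underline{\breve\mG}_x$ by gluing: for $\omega\in\Omega_x$ take a chart $\underline{\mathcal P}^{(\omega)}\cong\underline{\breve\mP}_{\breve\bff}$, glue $\underline{\mathcal P}^{(\omega)}|_{\breve\mO[u^{\pm1}]}=\underline G$ to $\underline{\mathcal P}^{(\omega')}|_{\breve\mO[u^{\pm1}]}=\underline G$ by right translation by $v_\omega v_{\omega'}^{-1}$, and impose the group law $(p,\omega)\cdot(p',\omega')=(p\,\gamma_\omega(p')\,z_{\omega,\omega'},\,\omega\omega')$. Over $\breve\mO[u^{\pm1}]$ this reads $pv_\omega\cdot p'v_{\omega'}$, i.e.\ ordinary multiplication in $\underline G$, so $\underline{\breve\mG}_x$ extends $\underline G$; each chart is smooth, so $\underline{\breve\mG}_x$ is smooth over $\breve\mO[u]$; and its two specializations are $\bigcup_{\omega\in\Omega_x}\breve\mP_{\breve\bff}\,\dot v_\omega$ and $\bigcup_{\omega\in\Omega_x}\breve\mP^\flat_{\breve\bff}\,\dot v^\flat_\omega$, which are exactly the standard presentations of $\breve\mG_x$ and $\breve\mG^\flat_x$ as the union, over the cosets $\Omega_x\cong\breve\mG_x(\breve\mO)/\breve\mP_{\breve\bff}(\breve\mO)$, of the components $\breve\mP_{\breve\bff}\dot v_\omega$ (invoking \cite[Theorem 4.1]{PZ17} for the identity components and the defining property of $\{v_\omega\}$, cf.\ \cite[Proposition 3.3]{DLYZ25}, for the representatives). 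For uniqueness: any competitor has identity component satisfying the hypotheses of \cite[Theorem 4.1]{PZ17}, hence equal to $\underline{\breve\mP}_{\breve\bff}$, and its component over $\omega$ is a bi-$\underline{\breve\mP}_{\breve\bff}$-torsor over $\breve\mO[u]$ trivialized over $\breve\mO[u^{\pm1}]$ with prescribed specializations; rigidity of such torsors — by the argument of \emph{loc.\ cit.}, which reduces the question to $\breve F[[u]]$ — forces agreement with the one above. Affineness when $\Omega_x$ is finite I would deduce as in \emph{loc.\ cit.}: finitely many affine charts, the scheme separated (see below), and the construction realizable inside an affine jet group exactly as for $\underline{\breve\mP}_{\breve\bff}$ (alternatively, $\underline{\breve\mG}_x\to\underline{\breve\mG}_x/\underline{\breve\mP}_{\breve\bff}$ is affine onto a quasi-finite separated, hence affine, base).

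The delicate point, which I expect to be the main obstacle, is to show the glued $\underline{\breve\mG}_x$ is \emph{separated} — that the gluing does not produce a ``doubled'' fibre over the closed point $s=(\varpi,u)$ of $\Spec\breve\mO[u]$. This is precisely the mechanism by which the locally-of-finite-type N\'eron model of $\bG_m$ is separated despite being glued from copies of $\bG_m$ along their common generic fibre: here an $\breve\mO[u]$-valued point — and by the valuative criterion it suffices to test over the two specialization maps together with the generic fibre — lies in a unique chart, because $v_\omega v_{\omega'}^{-1}\notin\underline{\breve\mP}_{\breve\bff}(\breve\mO[u])$ for $\omega\ne\omega'$ (the $v_\omega$ represent distinct cosets of $W_\af$ in $\widetilde W$, so their ratios have nontrivial $u$- and $\varpi$-content), so the transition maps shift the integral locus and no two charts meet over $s$. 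Once separatedness is established, the group axioms, smoothness, and identification of the two specializations are routine.
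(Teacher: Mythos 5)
Your construction is essentially the one in the paper: glue $|\Omega_x|$ copies of $\underline{\breve\mP}_{\breve\bff}$ over $\breve\mO[u]$ along the common open $\underline G$ via right translation by $v_\omega v_{\omega'}^{-1}$, obtain the group structure from conjugation and a cocycle in $\underline{\breve\mS}$, identify the specializations, and deduce uniqueness by restricting to $\breve F[[u]]$ as in \cite[4.2.1]{PZ17}. The paper packages the group law by directly invoking the argument of \cite[Proposition~4.6.18]{BT84} together with Lemma~\ref{lem: map between parahoric}, rather than unwinding the $(\gamma_\bullet,z_{\bullet,\bullet})$-cocycle, but these are the same thing; your verification that $z_{\omega,\omega'}\in\underline{\breve\mS}(\breve\mO[u])$ (trivial image in $\wt W$, unit-group structure of $\breve\mO[u^{\pm1}]$, vanishing $u$-valuation forced by the equal-characteristic specialization) is correct.

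Two places where your write-up is weaker than the paper's and should be tightened. First, the affineness step: the paper does \emph{not} argue via the component quotient; it observes that $\underline{\breve\mG}_x$ is quasi-compact (for $\Omega_x$ finite) and quasi-separated, covered by finitely many affines whose pairwise (and higher) intersections all equal the affine $\underline G$, so the \v{C}ech complex computes quasi-coherent cohomology and Serre's criterion applies. Your alternative --- ``$\underline{\breve\mG}_x\to\underline{\breve\mG}_x/\underline{\breve\mP}_{\breve\bff}$ is affine onto a quasi-finite separated, hence affine, base'' --- does not work as stated: the quotient has one point over $\breve\mO[u^{\pm 1}]$ but $|\Omega_x|$ points over $\{u=0\}$, so it is not \'etale and its representability as a scheme is not automatic; and even granting representability, quasi-finite $+$ separated over an affine base only yields quasi-affine (Zariski's Main Theorem), not affine. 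Second, on separatedness: your reduction ``by the valuative criterion it suffices to test over the two specialization maps together with the generic fibre'' is not a correct use of the valuative criterion, which must be checked against \emph{all} DVRs mapping to $\Spec\breve\mO[u]$ (e.g.\ $\breve\mO[u]_{(u)}$ or $\breve F[[u]]$), not just the two named ones. That said, the paper itself only asserts quasi-separatedness in the proof and obtains separatedness for finite $\Omega_x$ a posteriori from affineness, so this is a point on which both treatments lean on the gluing argument of \cite[Proposition~4.6.18]{BT84}.
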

Of course, once we have the construction of $\underline {\breve \mG}_x$ for $x\in \overline{\breve\bfa}$, we have the construction for all points in $\scrB^{\ext}(G,\breve F)$.
\begin{proof}
The argument is the same as \cite[Proposition 4.6.18]{BT84}. 
Without loss of generality, we may assume that $x$ is in the closure of the alcove $\breve\bfa$. Recall the subgroup $\Omega_x\subset\Omega$ from \eqref{eq:omegax}. For $\omega,\omega'\in\Omega_x$, let
\begin{equation*}\label{fom}
f_{\omega\omega'}: \underline G_{\breve\mO[u^{\pm}]}\to  \underline G_{\breve\mO[u^{\pm}]},\quad g\mapsto gv_{\omega'}^{-1}v_{\omega}.
\end{equation*}
As $f_{\omega\omega'}\circ  f_{\omega'\omega''}=f_{\omega\omega''}$, we can use this system to glue $\sqcup_{\Omega_x}\underline {\br\mP}_{\breve\bff}$ to obtain a smooth scheme $\underline {\breve\mG}_x$ over $\breve\mO[u]$. It is clearly quasi-separated. To show it is affine when $\Omega_x$ is finite, we notice that under this assumption $\underline{\mG}_x$ is quasi-compact. Then it is enough to show that $H^i(\underline{\breve\mG}_x,\mF)=0$ for every quasi-coherent sheaf $\mF$ on $\underline{\breve\mG}_x$ and every $i>0$. But this follows from an easy \v{C}ech cohomology computation using the natural covering of $\underline{\breve\mG}_x$ by affine open subsets $\sqcup_{\Omega_x}\underline \mP_{\breve\bff}$.

Using Lemma \ref{lem: map between parahoric}, the same argument as in \cite[Proposition 4.6.18]{BT84} shows that $\underline{\breve\mG}_x$ has a natural group scheme structure extending that of $\underline{\breve\mP}_{\breve\bff}$.
Its two specializations give the corresponding group schemes $\breve\mG_x$ over $\breve\mO$ and $\breve\mG_x^\flat$ over $\breve\mO^{\flat}$ respectively, by the construction of $\mG_x$ and $\mG_x^\flat$ as in \cite[Proposition 4.6.18]{BT84}. Finally, the argument as in \cite[4.2.1]{PZ17} shows that $\underline{\breve\mG}_x|_{\breve F[[u]]}$ is also the Bruhat-Tits group scheme of $G_{\breve F(\!(u)\!)}$ corresponding to $x\in \scrA(G, S, \breve F(\!(u)\!))$, and therefore is unique.
\end{proof}

\subsection{Diagrams of Hecke stacks: equivariant version}\label{ss:Hk diag eq}
We will compute affine Hecke categories as the colimits of finite Hecke categories indexed by certain diagrams. In the next two subsections, we introduce and compare these diagrams in mixed and equal characteristic settings. This subsection deals with the unipotent case and the next subsection deals with the monodromic case. Strictly speaking, we will only need the monodromic case. However, we feel that it might be technically slightly easier if we first present the unipotent case and indicate the needed modifications in the monodromic case.

We consider the following partially ordered set $\frakP ar$ whose elements are facets $\breve\bff$ contained in the closure of the alcove $\breve\bfa$, and whose partial order is given by requiring $\breve\bff'\leq \breve\bff$ if $\breve\bff$ is contained in the closure of $\breve\bff'$. We consider $\frakP ar$ as an ordinary category. 
Consider three functors
\begin{equation}\label{eq: natural transform for classifying stack of parahorics}
\bB L^+\breve\mP,\ \bB L^+\breve\mP^\flat,\ \bB M: \frakP ar\to \prestk_k,
\end{equation}
where $\prestk_k$ denotes the category of perfect prestacks over $k$ (see \cite[\S 10.1]{Zhu25} for details). The first sends $\breve\bff$ to $\bB L^+\breve\mP_{\breve\bff}$, the second sends $\breve\bff$ to $\bB L^+\breve\mP^\flat_{\breve\bff}$ and the third sends $\breve\bff$ to $\bB M_{\breve\bff}$. Here, as usual for an affine group scheme $H$ over $k$, $\bB H$ denotes the classifying stack of $H$ in the \'etale topology.  The previous commutative diagram \eqref{eq: map par mixed vs equal} induces natural transformations 
\begin{equation}\label{eq: natural transform for classifying stack of parahorics-2}
\breve\bff\mapsto (\bB L^+\breve\mP_{\breve\bff}\to \bB M_{\breve\bff}\leftarrow  \bB L^+\breve\mP_{\breve\bff}^\flat)
\end{equation}
between these functors.

Let $\corr(\prestk_k)$ be the symmetric monoidal category of correspondences (e.g. see \cite[\S 8.1]{Zhu25} for a summary). 
As explained in \emph{loc. cit.}, taking the \v{C}ech nerves of $\bB I_k\to \bB L^+\breve\mP_{\breve\bff}$ produces a Segal object in $\corr(\prestk_k)$, which in turn endows $I_k\bs L^+\breve\mP_{\breve\bff}/I_k=\bB I_k\bs \bB L^+\breve\mP_{\breve\bff}/ \bB I_k$ with an algebra structure in  $\corr(\prestk_k)$. That is,  $I_k\bs L^+\breve\mP_{\breve\bff}/I_k$ is canonically lifted to an object in the category $\Alg(\corr(\prestk_k))$. If we regard $\bB I_k$ as a constant functor from $\frak P ar$ to $\prestk_k$ and apply this construction to $\bB I_k\to \bB L^+\breve\mP$, where $\bB L^+\breve\mP$ denotes the functor from \eqref{eq: natural transform for classifying stack of parahorics}, then we obtain a functor 
\[
\mH k^\circ:  \frakP ar\to \Alg(\corr(\prestk_k)),\quad \breve\bff\to I_k\bs L^+\breve\mP_{\breve\bff}/I_k.
\] 
The above construction applies verbatim to the functor $\bB L^+\breve\mP_{\breve\bff}^\flat$ to obtain
\[
\mH k^{\flat,\circ}:  \frakP ar\to \Alg(\corr(\prestk_k)),\quad \breve\bff\to I^{\flat}_k\bs L^+\breve\mP^{\flat}_{\breve\bff}/I^{\flat}_k.
\]
Using the Borel subgroup $B_{\br\bff}$ of $M_{\br\bff}$, we can apply the same construction to the functor $\bB M$ to obtain a functor
\[
\ov\mH k^{\flat,\circ}:  \frakP ar\to \Alg(\corr(\prestk_k)),\quad \breve\bff\to  B_{\br\bff}\bs M_{\breve\bff}/B_{\br\bff}.
\]
These functors are related by natural transformations
\begin{equation}\label{eq: natural transform for Hecke stack}
\mH k^\circ\to \overline{\mH k}^\circ\leftarrow \mH k^{\flat,\circ}: \frakP ar\to \Alg(\corr(\prestk_k)).
\end{equation}

To continue, recall that $\Omega$ acts on $\breve\bfa$ and therefore acts on $\frakP ar$. On the other hand, there is a canonical isomorphism $N_{LG_k}(I_k)/I_k\cong \Omega$. Therefore, for $\omega\in\Omega$, conjugation by a lifting $\tilde\omega\in N_{LG_k}(I_k)$ induces an isomorphism in $\Alg(\prestk_k)$ 
\[
I_k\bs L^+\breve\mP_{\breve\bff}/I_k \cong I_k\bs L^+\breve\mP_{\omega(\breve\bff)}/I_k,
\] 
which is canonically independent of the choice of the lifting. Similarly, we have a canonical isomorphism $N_{LG_k^\flat}(I_k^\flat)/I_k^\flat\cong \Omega$ and therefore a canonical isomorphism $I^\flat_k\bs L^+\breve\mP^\flat_{\breve\bff}/I^\flat_k \cong I^\flat_k\bs L^+\breve\mP^\flat_{\omega(\breve\bff)}/I^\flat_k$ for $\omega\in \Omega$. In addition, there is a canonical isomorphism $B_{\breve\bff}\bs M_{\breve\bff}/B_{\breve\bff}\to B_{\omega(\breve\bff)}\bs M_{\omega(\breve\bff)}/B_{\omega(\breve\bff)}$ fitting into the following commutative diagram
\begin{equation}\label{omega diag}
\xymatrix{
I_k\bs L^+\breve\mP_{\breve\bff}/I_k\ar[r]\ar[d] & B_{\breve\bff}\bs M_{\breve\bff}/B_{\breve\bff} \ar[d]& \ar[l]I^\flat_k\bs L^+\breve\mP^\flat_{\breve\bff}/I^\flat_k\ar[d] \\
I_k\bs L^+\breve\mP_{\omega(\breve\bff)}/I_k\ar[r] & B_{\omega(\breve\bff)}\bs M_{\omega(\breve\bff)}/B_{\omega(\breve\bff)} &\ar[l]I^\flat_k\bs L^+\breve\mP^\flat_{\omega(\breve\bff)}/I^\flat_k.
}
\end{equation}
This follows from the fact the conjugation action of $v_\omega$ on $\underline{G}$ extends to an isomorphism $\underline{\breve\mP}_{\breve\bff} \to \underline{\breve\mP}_{\omega(\breve\bff)}$ by Lemma \ref{lem: map between parahoric}.

Similarly, the Frobenius $\sigma\in \Gal(\breve F/F)=\Gal(\breve F^\flat/F^\flat)$ acts on everything and we have a similar commutative diagram as \eqref{omega diag} where $\omega$ is replaced by $\sigma$.
Formally, we can summarize to say the functors in \eqref{eq: natural transform for Hecke stack} descend to functors
\[
\mH k^{\circ,\Omega\rtimes\langle\sigma\rangle}\to \overline{\mH k}^{\circ,\Omega\rtimes\langle\sigma\rangle}\leftarrow \mH k^{\flat,\circ,\Omega\rtimes\langle\sigma\rangle}:  \frakP ar/(\Omega\rtimes\langle\sigma\rangle)\to \Alg(\corr(\prestk_k)).
\]
Here $\frakP ar/(\Omega\rtimes\langle\sigma\rangle)$ denotes the quotient groupoid of $\frakP ar$ by $\Omega\rtimes\langle\sigma\rangle$, which is the geometric realization (in the $\infty$-category of $\infty$-categories) of the simplicial category $(\Omega\rtimes\langle\sigma\rangle)^\bullet\times\frakP ar$.

Next, let $\frakP ar^{\triangleright}$ be the right cone of $\frakP ar$. The two functors $\bB L^+\breve\mP$ and $\bB L^+\breve\mP^\flat$ in \eqref{eq: natural transform for classifying stack of parahorics} natural extends to functors $\frakP ar^{\triangleright}\to \prestk_k$ by sending the cone point to $\bB LG_k^\circ$ and $\bB LG_k^{\flat,\circ}$ respectively. Therefore, the two functors $\mH k^\circ, \mH k^{\flat,\circ}$ in \eqref{eq: natural transform for Hecke stack} extend to $\frakP ar^{\triangleright}$ sending the cone point to $I_k\bs LG_k^\circ/I_k$ and $I_k^\flat\bs LG_k^{\flat,\circ}/I_k^{\flat}$ respectively. Using the conjugation action of $\Omega$ on $I_k\bs LG_k^\circ/I_k$ and $I_k^\flat\bs LG_k^{\flat,\circ}/I_k^{\flat}$, these functors further descend to the quotient $\frakP ar^{\triangleright}/(\Omega\rtimes\langle\sigma\rangle)$
\[
\mH k^{\circ,\Omega\rtimes\langle\sigma\rangle,\triangleright}, \mH k^{\flat, \circ, \Omega\rtimes\langle\sigma\rangle,\triangleright}: \frakP ar^{\triangleright}/(\Omega\rtimes\langle\sigma\rangle)\to \Alg(\corr(\prestk_k)).
\]

\subsection{Diagram of Hecke categories: monodromic version}
There are also monodromic analogues of \eqref{eq: natural transform for Hecke stack}. To explain its meaning, let $\bfC$ be the category of pairs $(H,X)$ consisting of a perfect prestack over $k$ equipped with an action of a torus $H$. This category admits finite product. See the discussion before \cite[Proposition 4.28]{Zhu25}.
We may endow each space in \eqref{eq: natural transform for classifying stack of parahorics-2} with the trivial action of the trivial torus, and regard functors in \eqref{eq: natural transform for classifying stack of parahorics} from $\frakP ar$ to $\bfC$. We equip $\bB I^+_k$ with the natural action of $\mS_k$. Then $I^+_k\bs L^+\breve\mP_{\breve\bff}/I^+_k$ is equipped with the $(\mS_k\times\mS_k)$-action by left and right translations, and can be naturally regarded as an object in $\Alg(\corr(\bfC))$. Repeating the previous construction then gives $\mH k^{\circ,+}: \frakP ar\to \Alg(\corr(\bfC))$ sending $\breve\bff$ to  $(\mS_{k}\times \mS_{k},I^+_k\bs L^+\breve\mP_{\breve\bff}/I^+_k)$. 
Similar discussions apply to the equal characteristic setting, and the finite setting, giving 
\begin{equation}\label{eq: natural transform for mono Hecke stack}
\mH k^{\circ,+}\to \overline{\mH k}^{\circ,+}\leftarrow \mH k^{\flat,\circ,+}: \frakP ar\to \Alg(\corr(\bfC)).
\end{equation}
To obtain the equivariant structures on these functors, we need one new ingredient. Let $C$ be the barycenter of $\breve\bfa$ and consider the Bruhat-Tits group scheme $\underline{\breve\mG}_{C}$ over $\breve\mO[u]$, and its specializations $\breve\mG_C$ over $\br\mO$ and $\br\mG_{C}^{\flat}$ over $\br\mO^{\flat}$. We have canonical identifications $\pi_{0}(\breve\mG_C)=\pi_{0}(\br\mG_{C}^{\flat})=\Omega$.

Let
\[
L:=\breve\mG_C\otimes_{\breve\mO}k=\underline{\breve\mG}_C\otimes_{\breve\mO[u], u=\varpi=0}k=\breve\mG_C^{\flat}\otimes_{k[[u]]}k.
\]
It is equipped with the following filtration by closed normal subgroups
\[
L=L_{-1}\triangleright L_0\triangleright L_1,
\] 
where $L_0=L^\circ$ is the neutral component, $L_1$ is the unipotent radical of $L_0$. Then $L/L_1$ is an extension of $\Omega$ by $\mS_k=\mS^\flat_k$. We let
\begin{equation*}
    \wt\Omega:=(L/L_1)(k)\supset \wt\Omega^\circ:=(L_0/L_1)(k)=\mS_k(k)=\mS^\flat_k(k),
\end{equation*}
both as abstract groups. We have $\wt\Omega/\wt\Omega^\circ\cong\Omega$. 
Since $L/L_1$ is in fact defined over $\kappa$, the Frobenius $\sigma$ acts on $\wt\Omega$ by automorphisms, preserving the subgroup $\wt\Omega^\circ$. 

We note that there are canonical isomorphisms
\[
(N_{LG_k}(I^+_k)/I^+_k)(k)\cong \wt\Omega\cong (N_{LG^\flat_k}(I^{\flat,+}_k)/I^{\flat,+}_k)(k),
\]
compatible with the $\sigma$-action. We thus can repeat the discussion of \S\ref{ss:Hk diag eq}, with the role of $\Omega\rtimes \langle\sigma\rangle$ played by $\wt\Omega\rtimes \langle\sigma\rangle$, to descend $\mH k^{\circ,+}, \overline{\mH k}^{\circ,+}$ and $\mH k^{\flat,\circ,+}$ to
\[
\mH k^{\circ,+,\wt\Omega\rtimes\langle\sigma\rangle}\to \overline{\mH k}^{\circ,+,\wt\Omega\rtimes\langle\sigma\rangle}\leftarrow \mH k^{\flat,\circ,+,\wt\Omega\rtimes\langle\sigma\rangle}:  \frakP ar/(\wt\Omega\rtimes\langle\sigma\rangle)\to \Alg(\corr(\bfC)).
\]
In addition, we have extensions
\[
\mH k^{\circ,+,\wt\Omega\rtimes\langle\sigma\rangle,\triangleright}, \mH k^{\flat, \circ,+,\wt\Omega\rtimes\langle\sigma\rangle,\triangleright}: \frakP ar^{\triangleright}/(\wt\Omega\rtimes\langle\sigma\rangle)\to \Alg(\corr(\bfC))
\]
sending the cone point to $(\mS_{k}\times \mS_{k}, I^{+}_{k}\bs LG_{k}^{\circ}/I^{+}_{k})$ and $(\mS_{k}\times \mS_{k}, I^{\flat,+}_{k}\bs LG^{\flat,\circ}_{k}/I^{\flat,+}_{k})$.

We need one more piece of structure. We regard $\wt\Omega$ as a discrete group scheme over $k$, equipped with the action of the trivial torus. Then $\wt\Omega$ can be regarded as an object in $\Alg(\corr(\bfC))$. There is a natural homomorphism in $\Alg(\corr(\bfC))$,
\begin{equation}\label{eq:wtOmega to Hk}
\wt\Omega\to I^+_k\bs LG_k/I^+_k,\quad \tilde\omega\mapsto I^+_k\bs I^+_k\tilde\omega/I^+_k,
\end{equation}
which restricts to a homomorphism $\wt\Omega^\circ\to I^+_k\bs LG^\circ_k/I^+_k$. In addition, this map factors through $\wt\Omega^\circ\to I^+_k\bs L^+\breve\mP_{\breve\bff}/I^+_k\subset I^+_k\bs LG^\circ_k/I^+_k$ for each $\breve\bff$. Clearly, there is the corresponding equal characteristic version. Finally, the two maps $\wt\Omega^\circ\to I^+_k\bs L^+\breve\mP_{\breve\bff}/I^+_k\to U_{\breve\bff}\bs M_{\breve\bff}/U_{\breve\bff}$ and $\wt\Omega^\circ\to I^{\flat,+}_k\bs L^+\breve\mP^\flat_{\breve\bff}/I^{\flat,+}_k\to U_{\breve\bff}\bs M_{\breve\bff}/U_{\breve\bff}$ are canonically isomorphic.

We formally organize this structure as follows.
Consider the constant functor $\frakP ar^{\triangleright}\to \Alg(\corr(\bfC))$ sending $\breve\bff$ to $\wt\Omega^{\circ}$, which descends to $\frakP ar^{\triangleright}/(\wt\Omega\rtimes\langle\sigma\rangle) \to \Alg(\corr(\bfC))$ using the action of $\wt\Omega\rtimes\langle\sigma\rangle$ on $\wt\Omega^\circ$. We shall denote this functor by $\wt\Omega^{\circ,\wt\Omega\rtimes\langle\sigma\rangle,\triangleright}$. Then we have the following commutative diagram of natural transformation of functors
\begin{equation}\label{eq: all in one diagram}
\xymatrix{
\mH k^{\circ,+,\wt\Omega\rtimes\langle\sigma\rangle,\triangleright}\ar@{..>}[d]&\ar[l]\wt\Omega^{\circ,\wt\Omega\rtimes\langle\sigma\rangle,\triangleright}\ar[r]\ar@{..>}[d]& \mH k^{\flat,\circ,+,\wt\Omega\rtimes\langle\sigma\rangle,\triangleright}\ar@{..>}[d]\\
\mH k^{\circ,+,\wt\Omega\rtimes\langle\sigma\rangle}\ar[dr]&\ar[l]\wt\Omega^{\circ,\wt\Omega\rtimes\langle\sigma\rangle}\ar[r]& \mH k^{\flat,\circ,+,\wt\Omega\rtimes\langle\sigma\rangle}\ar[dl]\\
&\overline{\mH k}^{\circ,+,\wt\Omega\rtimes\langle\sigma\rangle}.&}
\end{equation}
Here dotted arrow means restriction of functors along $\frakP ar/(\wt\Omega\rtimes\langle\sigma\rangle)\to \frakP ar^{\triangleright}/(\wt\Omega\rtimes\langle\sigma\rangle)$.

\section{Proof of the theorem}

\subsection{Neutral block}

We first prove a weaker version of Theorem \ref{intro: main thm}: the full subcategories of the affine Hecke categories consisting of sheaves supported on the neutral component of the loop group
\begin{eqnarray*}    \mH^\circ_{\mon,\af}:=\Shv\bigl((I_k,\mon)\bs LG^\circ_k/(I_k,\mon)\bigr), &\quad &\mH^{\flat,\circ}_{\mon,\af}:=\Shv\bigl((I_k,\mon)\bs LG^{\flat,\circ}_k/(I_k,\mon)\bigr),\\
\mH^{\circ}_{\unip,\af}=\Shv(I_k\bs LG^\circ_k/I_k), &\quad &\mH^{\flat,\circ}_{\unip,\af}=\Shv(I^\flat_k\bs LG^{\flat,\circ}_k/I^\flat_k),
\end{eqnarray*}
are canonically equivalent in each line.

For a perfect prestack $X$ over $k$, let $\Shv(X)$ denote the $\infty$-category of $\ell$-adic (co)sheaves on $X$ with coefficient in $\La=\bZ_\ell$. (See \cite[\S 10.4]{Zhu25} for details.) Let $\lincat_\La$ denote the category of $\La$-linear presentable stable $\infty$-categories, equipped with Lurie's tensor product.  
There is a lax symmetric monoidal functor
\[
\Shv: \corr(\prestk_k)_{\mathrm{V};\mathrm{All}}\to \lincat_\La, \quad X\mapsto \Shv(X)
\]
as constructed in \cite[(10.47)]{Zhu25}, where $\mathrm{V}$ is a class of morphisms in $\prestk_k$ containing perfectly finitely presented morphisms.  We note that $\bB I_k\to \bB L^+\breve\mP_{\breve\bff}$ is perfectly finitely presented and proper, and similarly in other two situations. Therefore,  $\mH k^\circ, \mH k^{\flat,\circ}, \overline{\mH k}^{\circ}$ take value in $\Alg(\corr(\prestk_k)_{\mathrm{V};\mathrm{All}})$. 

Now composing $\Shv$ with functors appearing in \eqref{eq: natural transform for Hecke stack}, we obtain three functors $\frakP ar\to \Alg(\lincat_\La)$ and natural transformations between them, denoted as 
\begin{equation}\label{eq: colimit unipotent Hecke category mix and eq}
\mH^{\circ}_{\unip}\to \overline{\mH}^{\circ}_{\unip}\leftarrow \mH_{\unip}^{\flat,\circ},
\end{equation}
where 
\begin{itemize}
\item $\mH^\circ_{\unip}$ sends $\breve\bff$ to $\Shv\bigl(I_k\bs L^+\breve\mP_{\breve\bff}/I_k\bigr)$;
\item $\mH_{\unip}^{\flat,\circ}$ sends $\breve\bff$ to $\Shv\bigl(I^\flat_k\bs L^+\breve\mP^\flat_{\breve\bff}/I^\flat_k\bigr)$; 
\item $\overline{\mH}^\circ_{\unip}$ sends $\breve\bff$ to $\Shv\bigl(B_{\breve\bff}\bs M_{\breve\bff}/ B_{\breve\bff}\bigr)$. 
\end{itemize}
We also notice that all functors in \eqref{eq: colimit unipotent Hecke category mix and eq} descend to 
\[
\mH^{\circ,\Omega\rtimes\langle\sigma\rangle}_{\unip}\to \overline{\mH}^{\circ,\Omega\rtimes\langle\sigma\rangle}_{\unip}\leftarrow \mH_{\unip}^{\flat,\circ,\Omega\rtimes\langle\sigma\rangle}: \frakP ar/(\Omega\rtimes\langle\sigma\rangle)\to \Alg(\corr(\prestk_k)_{\mathrm{V};\mathrm{All}}),
\] 
and $\mH^{\circ,\Omega\rtimes\langle\sigma\rangle}_{\unip}, \mH_{\unip}^{\flat,\circ,\Omega\rtimes\langle\sigma\rangle}$ extend to 
\[
\mH^{\circ,\Omega\rtimes\langle\sigma\rangle,\triangleright}_{\unip}, \mH_{\unip}^{\flat,\circ,\Omega\rtimes\langle\sigma\rangle,\triangleright}:\frakP ar^{\triangleright}/(\Omega\rtimes\langle\sigma\rangle)\to \Alg(\corr(\prestk_k)_{\mathrm{V};\mathrm{All}}),
\]
where the cone points are sent $\mH^{\circ}_{\unip,\af}$ and $\mH^{\flat,\circ}_{\unip,\af}$, equipped with the $\Omega\rtimes\langle\sigma\rangle$-actions coming from the conjugation action of $\Omega$ on $I_k\bs LG^\circ_k/I_k$ and on $I^\flat_k\bs LG^{\flat,\circ}_k/I^\flat_k$, and the $*$-pushforward along the Frobenius endomorphisms.

Similarly we have a lax symmetric monoidal functor
\[
\Shv_\mon: \corr(\bfC)_{\mathrm{V};\mathrm{All}}\to \lincat_\La, \quad (H, X)\mapsto \Shv\bigl((H,\mon)\bs X\bigr),
\]
as constructed in \cite[Proposition 4.28]{Zhu25}. Then we can compose functors in \eqref{eq: natural transform for mono Hecke stack} with $\Shv_{\mon}$, giving 
\[
\mH^{\circ}_\mon\rightarrow \overline{\mH}^{\circ}_\mon\leftarrow \mH^{\flat,\circ}_\mon,
\]
and their various extensions.

We note that the map $I_k\bs L^+\breve\mP_{\breve\bff}/I_k\to B_{\breve\bff}\bs M_{\breve\bff}/ B_{\breve\bff}$ is a gerbe under the pro-unipotent group $\ker (I_k\to B_{\breve\bff})$ and similarly  $I^\flat_k\bs L^+\breve\mP^\flat_{\breve\bff}/I^\flat_k\to B_{\breve\bff}\bs M_{\breve\bff}/ B_{\breve\bff}$ is a gerbe under the pro-unipotent group $\ker (I^\flat_k\to B_{\breve\bff})$. Therefore all the natural transformations in \eqref{eq: colimit unipotent Hecke category mix and eq} are isomorphisms, matching (co)standard objects. Here for $w\in \widetilde{W}$ we normalize the corresponding (co)standard object to be the one whose restriction to the Schubert cell is the dualizing sheaf of the Schubert cell, shifted by $\ell(w)$. We can apply the same reasoning in the monodromic case. 

We summarize the above discussions into the following lemma.
\begin{lem}\label{lem: equiv of finite Hecke categories}
There are natural isomorphisms of functors
\[
\mH^{\circ,\Omega\rtimes\langle\sigma\rangle}_{\unip}\simeq \mH_{\unip}^{\flat,\circ,\Omega\rtimes\langle\sigma\rangle}: \frakP ar/(\Omega\rtimes\langle\sigma\rangle)\to \Alg(\lincat_\La),
\] 
which send (co)standard objects to (co)standard objects. Similarly, there are natural isomorphisms of functors sending (co)standard objects to (co)standard objects
\[
\quad \mH^{\circ,\wt\Omega\rtimes\langle\sigma\rangle}_{\mon}\simeq \mH_{\mon}^{\flat,\circ,\wt\Omega\rtimes\langle\sigma\rangle}: \frakP ar/(\wt\Omega\rtimes\langle\sigma\rangle)\to \Alg(\lincat_\La).
\]
\end{lem}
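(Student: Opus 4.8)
The plan is to unwind the definitions and reduce the claimed natural isomorphism of functors to the pointwise (facet-by-facet) isomorphisms that were essentially already established, then upgrade these to isomorphisms of \emph{equivariant} functors by tracking the extra structure encoded in diagram \eqref{eq: map par mixed vs equal}, \eqref{omega diag} and its $\sigma$-analogue. Concretely, for each fixed facet $\breve\bff\subset\overline{\breve\bfa}$ the morphisms $I_k\bs L^+\breve\mP_{\breve\bff}/I_k\to B_{\breve\bff}\bs M_{\breve\bff}/B_{\breve\bff}$ and $I^\flat_k\bs L^+\breve\mP^\flat_{\breve\bff}/I^\flat_k\to B_{\breve\bff}\bs M_{\breve\bff}/B_{\breve\bff}$ are gerbes under the pro-unipotent groups $\ker(I_k\to B_{\breve\bff})$ and $\ker(I^\flat_k\to B_{\breve\bff})$ respectively, and since $\Shv$ (resp. $\Shv_\mon$) is invariant under pro-unipotent gerbes, applying $\Shv$ (resp. $\Shv_\mon$) to \eqref{eq: natural transform for Hecke stack} (resp. \eqref{eq: natural transform for mono Hecke stack}) turns \emph{every} leg of the natural transformation $\mH^\circ_{\unip}\to\overline{\mH}^\circ_{\unip}\leftarrow\mH^{\flat,\circ}_{\unip}$ into a pointwise equivalence in $\Alg(\lincat_\La)$. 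Composing, $\mH^\circ_{\unip}\simeq\mH^{\flat,\circ}_{\unip}$ pointwise, hence as functors out of $\frakP ar$, and similarly in the monodromic case.

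The second step is to promote this to an equivalence of functors out of the quotient category $\frakP ar/(\Omega\rtimes\langle\sigma\rangle)$ (resp. $\frakP ar/(\wt\Omega\rtimes\langle\sigma\rangle)$). By the description of these quotients as geometric realizations of the simplicial categories $(\Omega\rtimes\langle\sigma\rangle)^\bullet\times\frakP ar$ (resp. with $\wt\Omega$), a functor out of the quotient is the same as a functor out of $\frakP ar$ together with compatible descent data for the $\Omega\rtimes\langle\sigma\rangle$-action; an isomorphism of such functors is a pointwise isomorphism on $\frakP ar$ intertwining the descent data. The descent data on $\mH^{\circ}_{\unip}$ and $\mH^{\flat,\circ}_{\unip}$ come precisely from the commutative diagrams \eqref{omega diag} (for the $\omega\in\Omega$-conjugations) and its $\sigma$-analogue, applied to $\Shv$; since the pointwise equivalences of Step~1 are induced by the \emph{same} maps to $\overline{\mH}^\circ_{\unip}$ which are themselves $\Omega\rtimes\langle\sigma\rangle$-equivariant by \eqref{omega diag}, the equivalence automatically intertwines the two sets of descent data. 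In the monodromic case the only new ingredient is that the $\wt\Omega$-action is built using the group $L=\breve\mG_C\otimes k=\breve\mG^\flat_C\otimes k$, \emph{the same} group scheme on both sides by construction of $\underline{\breve\mG}_C$; this is exactly the canonical identification $(N_{LG_k}(I^+_k)/I^+_k)(k)\cong\wt\Omega\cong(N_{LG^\flat_k}(I^{\flat,+}_k)/I^{\flat,+}_k)(k)$ compatible with $\sigma$, so the two descent data match on the nose.

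Finally one checks the compatibility with (co)standard objects. For $w\in\widetilde W$ the (co)standard object on $I_k\bs LG^\circ_k/I_k$ restricted to any $\breve\bff$ with $w$ in the image of $L^+\breve\mP_{\breve\bff}$ is computed from the Schubert cell stratification of the (enhanced) partial affine flag variety $L^+\breve\mP_{\breve\bff}/I_k$, and the isomorphism $L^+\breve\mP_{\breve\bff}/I_k\cong L^+\breve\mP^\flat_{\breve\bff}/I^\flat_k$ (induced from $\underline{\breve\mP}_{\breve\bff}$ and the section $\{v_\omega\}$ of $\Omega$) is stratified and length-preserving, so it matches the normalized (co)standard objects (restriction to the Schubert cell is the shifted dualizing sheaf); the monodromic variant is identical. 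I expect the main obstacle to be purely bookkeeping: assembling the pointwise equivalences into a genuine natural transformation of $\infty$-functors valued in $\Alg(\lincat_\La)$ and verifying that the higher coherences of the $\wt\Omega\rtimes\langle\sigma\rangle$-descent are respected — this is where one must be careful that $\Shv_\mon$ really is lax symmetric monoidal on $\corr(\bfC)_{\mathrm{V};\mathrm{All}}$ (so that the algebra and equivariance structures are transported coherently) and that the gerbe-invariance is natural in the facet. Everything else is a direct consequence of the identifications set up in \S\ref{ss:mix eq} and \S\ref{ss:Hk diag eq}.
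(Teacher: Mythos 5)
Your proposal is correct and takes essentially the same route as the paper: the pro-unipotent gerbe observation making both legs of the transformation $\mH^\circ_{\unip}\to\overline{\mH}^\circ_{\unip}\leftarrow\mH^{\flat,\circ}_{\unip}$ into equivalences after applying $\Shv$ (resp. $\Shv_\mon$), with the equivariant/descended structure coming from \eqref{omega diag} and its $\sigma$- and $\wt\Omega$-analogues, and the length-preserving identification of Schubert strata for the (co)standard comparison. The ``bookkeeping'' worry you flag at the end is handled automatically in the paper's setup because the natural transformations are first constructed as morphisms of functors $\frakP ar/(\Omega\rtimes\langle\sigma\rangle)\to\Alg(\corr(\prestk_k)_{\mathrm{V};\mathrm{All}})$ (resp. with $\wt\Omega$ and $\corr(\bfC)$) and only then composed with the lax symmetric monoidal $\Shv$, so the higher coherences of the equivariance are inherited rather than verified pointwise.
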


\begin{rem}
In the unipotent case, the above equivalences match (co)standard objects canonically. However,
In the monodromic case, there are cofree monodromic (co)standard objects as defined in \cite{DLYZ25, Zhu25}. They are a priori only defined up to isomorphism. The above equivalences only match them up to isomorphisms.
\end{rem}

Now we invoke the following important theorem of Tao and Travkin \cite{TT20}.
\begin{thm}[Tao-Travkin]\label{thm: TT sc}
The  functors 
\[
\mH^{\circ,\triangleright}_{\unip},\ \mH^{\flat, \circ,\triangleright}_{\unip},\ \mH^{\circ,\triangleright}_{\mon},\ \mH^{\flat, \circ,\triangleright}_{\mon}: \mathfrak Par^{\triangleright}\to \Alg(\lincat_\La)
\]
are colimit diagrams in $\Alg(\lincat_\La)$. 
\end{thm}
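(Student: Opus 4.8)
The plan is to produce the tautological comparison functor out of the colimit and to prove it is an equivalence. I describe the argument for $\mH^{\circ,\triangleright}_{\unip}$; the remaining three are handled identically, the monodromic versions merely carrying along the $\mS_k$-equivariance set up in \S\ref{ss:Hk diag eq}, and the equal-characteristic versions using nothing but the combinatorics of the facets $\breve\bff\subseteq\overline{\breve\bfa}$ together with the group schemes $L^{+}\breve\mP_{\breve\bff}\subseteq LG^{\circ}_k$ produced uniformly on both sides. Each closed embedding $L^{+}\breve\mP_{\breve\bff}\hookrightarrow LG^{\circ}_k$ is a homomorphism of group schemes over $k$, so the induced proper map $I_k\bs L^{+}\breve\mP_{\breve\bff}/I_k\to I_k\bs LG^{\circ}_k/I_k$ has a monoidal, colimit-preserving $*$-pushforward; these are compatible with the transition maps of Lemma~\ref{lem: map between parahoric}, and, sending the cone point to $\mH^{\circ}_{\unip,\af}$, the universal property of the colimit yields a canonical functor
\[
\Phi\colon \colim_{\frakP ar}\mH^{\circ}_{\unip}\longrightarrow \mH^{\circ}_{\unip,\af}
\]
in $\Alg(\lincat_\La)$. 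The theorem amounts to the assertion that $\Phi$ is an equivalence.

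For essential surjectivity, recall that $\mH^{\circ}_{\unip,\af}=\Shv(I_k\bs LG^{\circ}_k/I_k)$ is compactly generated by the (shifted) standard objects $\Delta_w$, $w\in W_\af$, of the Schubert stratification, and that for a reduced word $w=s_1\cdots s_\ell$ in the simple affine reflections one has $\Delta_w\simeq\Delta_{s_1}\star\cdots\star\Delta_{s_\ell}$, since the convolution (Bott--Samelson) map is an isomorphism over the open Schubert stratum labelled by $w$. For a simple reflection $s$, writing $\breve\bff_s\subseteq\overline{\breve\bfa}$ for the corresponding wall, $L^{+}\breve\mP_{\breve\bff_s}/I_k\cong\bP^1$ and both $\Delta_s$ and $\nabla_s$ lie in the essential image of the rank-one piece $\mH^{\circ}_{\unip}(\breve\bff_s)\to\mH^{\circ}_{\unip,\af}$. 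Since the essential image of $\Phi$ is a monoidal, colimit-closed full subcategory containing every $\Delta_w$, it is all of $\mH^{\circ}_{\unip,\af}$; by the same token $\colim_{\frakP ar}\mH^{\circ}_{\unip}$ is generated, under colimits and $\star$, by the images of the rank-one pieces $\mH^{\circ}_{\unip}(\breve\bff_s)$.

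The real work is full faithfulness: one must show that gluing along $\frakP ar$ imposes exactly the relations --- and higher coherences among them --- that hold in $\mH^{\circ}_{\unip,\af}$. Conceptually, $\colim_{\frakP ar}\mH^{\circ}_{\unip}$ is a categorical amalgam of the parahoric Hecke categories along the face poset of the alcove simplex, and the statement is a categorification of the classical amalgam presentation of $LG^{\circ}_k$ by its parahoric subgroups, which in turn rests on the contractibility of the Bruhat--Tits building --- equivalently, the contractibility of $\frakP ar$, the barycentric subdivision of a simplex. To execute this: since $\Phi$ preserves compact objects and both sides are generated under colimits and $\star$ by the Bott--Samelson objects $\Delta_{s_1}\star\cdots\star\Delta_{s_k}$, it suffices to show $\Phi$ is an isomorphism on the mapping objects $\Map(\Delta_{s_1}\star\cdots\star\Delta_{s_k},\Delta_{t_1}\star\cdots\star\Delta_{t_m})$ for all words. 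The target is controlled by the known (Soergel/Elias--Williamson-type) diagrammatic description of the affine Hecke category, whose generating relations --- the quadratic relation $\Delta_s\star\nabla_s\simeq\Delta_e$ and the braid relations --- already hold, respectively, in the codimension-one pieces $\mH^{\circ}_{\unip}(\breve\bff_s)$ and the codimension-two pieces $\mH^{\circ}_{\unip}(\breve\bff_{s,t})$ (whose reductive parahoric quotients $M_{\breve\bff_{s,t}}$ have semisimple rank two). The source is computed by the bar/simplicial resolution attached to the $\frakP ar$-indexed colimit; the crux is to evaluate it and match the two, verifying that no coherence data beyond what $\frakP ar$ already records is needed --- a derived, monoidal avatar of Matsumoto's theorem (any two reduced expressions are linked by braid moves), with the contractibility of $\frakP ar$ ensuring the absence of higher obstructions. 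Controlling this at the $\infty$-categorical level is the principal obstacle; I would expect the rank-two computation and the acyclicity of the relevant reduced-word complexes to be the load-bearing inputs. Granting it, $\Phi$ is fully faithful, hence an equivalence, and the other three functors follow verbatim.
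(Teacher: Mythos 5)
The paper does not prove this statement at all: it is invoked as Tao--Travkin's theorem, with the attribution in the theorem header. The remark immediately following Theorem~\ref{thm: TT sc} in the paper explains exactly what is being cited and what minor adaptations are needed: \cite{TT20} establishes the colimit presentation for the unipotent affine Hecke category of a \emph{simply-connected} group in the D-module context; the argument works in any six-functor formalism satisfying excision and $h$-descent, so it applies verbatim to $\Shv$ and $\Shv_{\mon}$ in the $\ell$-adic \'etale setting; and for non-simply-connected $G$ one invokes \cite[Appendix~C]{LNY24} to replace the whole Hecke category by its neutral block. That is the entirety of the paper's ``proof.'' Your proposal does not mention any of these reductions; in particular you do not flag that the monodromic version, strictly speaking, requires knowing that the Tao--Travkin argument goes through with the $\mS_k$-monodromic sheaf theory, nor do you explain why restricting to $LG^{\circ}_k$ is the correct statement for general $G$ (the unrestricted statement is false, since $\frakP ar$ only sees the neutral component).

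As a proof sketch of Tao--Travkin's theorem itself, your outline has a genuine and self-acknowledged gap at the decisive step. Construction of $\Phi$ and essential surjectivity via Bott--Samelson generation are fine and routine. But full faithfulness is the entire content of \cite{TT20}, and you write, correctly, that ``controlling this at the $\infty$-categorical level is the principal obstacle'' and then grant it. The reduction you propose --- compute mapping objects between Bott--Samelson products on both sides, check the quadratic relation in the rank-one parahorics and braid relations in the rank-two parahorics, and appeal to contractibility of the alcove (Matsumoto's theorem) to control higher coherences --- is the right heuristic shape of the argument, but it is not a proof. The serious work in \cite{TT20} is precisely in turning the acyclicity of reduced-word complexes into an actual computation of the bar construction for the $\frakP ar$-indexed colimit in $\Alg(\lincat_\La)$, and this is where all the difficulty lies. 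What you have is a statement of the problem, not a solution; since the paper handles the issue by citation, the honest comparison is that you identified the correct theorem to cite and sketched why it should be true, but you did not supply, and the paper does not need to supply, the proof.
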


Concretely, denote by $\colim^{\Alg}$ the colimit taken in the category $\Alg(\lincat_\La)$. Then the functors induced by the inclusions $\mH_{\unip,\breve\bff}\to \Shv(I_k\bs LG^\circ_k/I_k)$ and $\mH^{\flat}_{\unip,\breve\bff}\to \Shv(I^\flat_k\bs LG^{\flat,\circ}_k/I^\flat_k)$
\[
\colim^{\Alg}_{\breve\bff\in\frakP ar}\mH_{\unip,\breve\bff}\to \mH_{\unip,\af}^\circ,\quad \colim^{\Alg}_{\breve\bff\in\frakP ar}\mH^{\flat}_{\unip,\breve\bff}\to \mH_{\unip,\af}^{\flat,\circ},
\] 
are equivalences of monoidal categories. Similarly the functors
\[
\colim^{\Alg}_{\breve\bff\in\frakP ar}\mH_{\mon,\breve\bff}\to  \mH_{\mon,\af}^\circ,\quad \colim^{\Alg}_{\breve\bff\in\frakP ar}\mH^{\flat}_{\mon,\breve\bff}\to \mH_{\mon,\af}^{\flat,\circ},
\] 
are equivalences of monoidal categories.

\begin{rem}
Tao-Travkin proved their theorems for unipotent affine Hecke category of simply-connected group in the D-module context. As explained in \emph{loc. cit.}, their argument is applicable in any six functor formalism satisfying  excision and $h$-descent. This is the case for both $\Shv$ and $\Shv_{\mon}$ in the \'etale setting. In addition, as noticed in \cite[Appendix C]{LNY24}, when $G$ is not simply-connected one can replace the whole unipotent affine Hecke category by its neutral block.
\end{rem}

Lemma~\ref{lem: equiv of finite Hecke categories}
and Theorem~\ref{thm: TT sc} together imply the following corollary.
\begin{cor}\label{cor: equiv netural block}
There is a canonical $\Omega\rtimes\langle\sigma\rangle$-equivariant (resp. $\wt\Omega\rtimes\langle\sigma\rangle$-equivariant) equivalence
\[
\mH_{\unip,\af}^\circ\cong \mH_{\unip,\af}^{\flat,\circ}, \quad (\mbox{resp. } \mH_{\mon,\af}^\circ\cong \mH_{\mon,\af}^{\flat,\circ})
\]
matching (co)standard objects.
\end{cor}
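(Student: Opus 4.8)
The proof proposal for Corollary~\ref{cor: equiv netural block}.

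\medskip

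The plan is to combine the two main inputs available to us---the pointwise equivalence of finite Hecke categories from Lemma~\ref{lem: equiv of finite Hecke categories}, and the Tao--Travkin presentation of the affine Hecke category as a colimit of finite ones from Theorem~\ref{thm: TT sc}---by passing both through the colimit functor over the cone $\frakP ar^{\triangleright}$, keeping track of the equivariance under $\Omega\rtimes\js$ (resp.\ $\wt\Omega\rtimes\js$) throughout. First I would recall that the colimit of a functor valued in $\Alg(\lincat_\La)$, and more precisely the formation of colimit diagrams over $\frakP ar^{\triangleright}$, is itself functorial: a natural isomorphism between two functors $\frakP ar\to \Alg(\lincat_\La)$ that both extend to colimit diagrams over $\frakP ar^{\triangleright}$ induces a canonical equivalence between their cone values. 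Applying this to the natural isomorphism $\mH^{\circ,\Omega\rtimes\js}_{\unip}\simeq \mH^{\flat,\circ,\Omega\rtimes\js}_{\unip}$ of Lemma~\ref{lem: equiv of finite Hecke categories} (viewed as functors on the quotient groupoid $\frakP ar/(\Omega\rtimes\js)$), and using Theorem~\ref{thm: TT sc} to identify the cone values with $\mH^\circ_{\unip,\af}$ and $\mH^{\flat,\circ}_{\unip,\af}$, yields the desired equivalence; the $\Omega\rtimes\js$-equivariance is automatic because the whole construction has been carried out over the quotient groupoid, i.e.\ $\Omega\rtimes\js$ acts on everything in sight compatibly. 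The monodromic case is formally identical, replacing $\Omega\rtimes\js$ by $\wt\Omega\rtimes\js$ and $\Shv$ by $\Shv_\mon$.

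\medskip

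Two technical points require care. The first is that Theorem~\ref{thm: TT sc} is stated for the plain functors $\mH^{\circ,\triangleright}_{\unip}$ on $\frakP ar^{\triangleright}$, whereas we need the colimit statement to persist after descending to $\frakP ar^{\triangleright}/(\Omega\rtimes\js)$; here one uses that taking the quotient by the $(\Omega\rtimes\js)$-action commutes with the relevant colimit---concretely, $\frakP ar^{\triangleright}/(\Omega\rtimes\js)$ is the geometric realization of $(\Omega\rtimes\js)^\bullet\times\frakP ar^{\triangleright}$, and colimits commute with colimits---so that the cone of the descended functor is still the colimit, namely the $(\Omega\rtimes\js)$-equivariant object $\mH^\circ_{\unip,\af}$. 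The second point, already flagged in the remark after Lemma~\ref{lem: equiv of finite Hecke categories}, is that in the monodromic setting the (co)standard objects are the cofree monodromic ones, which are defined only up to isomorphism; consequently the equivalence $\mH^\circ_{\mon,\af}\cong\mH^{\flat,\circ}_{\mon,\af}$ matches them only up to (non-canonical) isomorphism, and the statement of the corollary should be read accordingly. The matching of (co)standard objects itself follows because the pointwise equivalence in Lemma~\ref{lem: equiv of finite Hecke categories} already matches them, and the colimit functor to $\mH^\circ_{\unip,\af}$ (resp.\ $\mH^{\flat,\circ}_{\unip,\af}$) sends the finite (co)standard objects to the affine ones with the same normalization.

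\medskip

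I expect the main obstacle to be purely bookkeeping rather than conceptual: one must verify that all the structures---the monoidal (algebra) structures, the $\Omega$- (resp.\ $\wt\Omega$-)equivariance via conjugation by the chosen representatives $v_\omega$, and the $\sigma_*$-action via $*$-pushforward along Frobenius---are genuinely compatible at the level of the diagrams $\mH k^{\circ,\Omega\rtimes\js,\triangleright}$ etc.\ constructed in \S\ref{ss:Hk diag eq} (and its monodromic counterpart), so that applying the lax symmetric monoidal functor $\Shv$ (resp.\ $\Shv_\mon$) and then the colimit produces a diagram over $\frakP ar^{\triangleright}/(\Omega\rtimes\js)$ to which both Lemma~\ref{lem: equiv of finite Hecke categories} and Theorem~\ref{thm: TT sc} apply simultaneously. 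The nontrivial geometric content---that the finite Hecke categories in mixed and equal characteristic agree because the relevant loop groups are gerbes over the \emph{common} finite-type reductive quotients $M_{\breve\bff}$---has already been extracted in Lemma~\ref{lem: equiv of finite Hecke categories}; what remains is to propagate it through the formal machinery without breaking the equivariance, which is why organizing everything over the quotient groupoids from the outset (as was done in \S\ref{ss:Hk diag eq}) is essential.
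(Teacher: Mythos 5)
Your proposal is correct and unpacks exactly what the paper leaves implicit in its one-line ``Lemma~\ref{lem: equiv of finite Hecke categories} and Theorem~\ref{thm: TT sc} together imply the following corollary.''\ The only place where your wording is a little off is the justification of equivariance via ``taking the quotient commutes with the colimit'': computing the colimit over $\frakP ar/(\Omega\rtimes\js)$ would yield the $\Omega\rtimes\js$-\emph{quotient} of $\mH^{\circ}_{\unip,\af}$ rather than the equivariant object itself. The cleaner way to say what you mean is that one left Kan extends along $\frakP ar^{\triangleright}/(\Omega\rtimes\js)\to B(\Omega\rtimes\js)$ (pointwise formula: colimit over the fiber $\frakP ar^{\triangleright}$), or, equivalently, that the comparison map $\colim_{\frakP ar}\mH^{\circ}_{\unip}\to \mH^{\circ}_{\unip,\af}$ is already a morphism of $\Omega\rtimes\js$-equivariant objects because it is induced by $\mH^{\circ,\Omega\rtimes\js,\triangleright}_{\unip}$ on the quotient groupoid, and the forgetful functor from equivariant objects is conservative, so Theorem~\ref{thm: TT sc} (which applies after forgetting equivariance) promotes it to an equivariant equivalence. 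With that adjustment your argument is the paper's intended proof.
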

If $G$ is simply-connected, then each version of the affine Hecke category coincides with its neutral block. In particular,
Theorem~\ref{intro: main thm} is proved for simply-connected $G$.

\subsection{Contracted semidirect product}\label{ss:semidirect}
To deduce  the case of a general reductive group from the equivalence of neutral blocks, our strategy is to present $\mH_{\mon,\af}$ as a ``semidirect product" of its neutral block $\mH_{\mon,\af}^{\circ}$ and the component group $\Omega$ of $LG_k$. To carry this out, we need a formalism slightly more general than the categorical semidirect product (which was used in \cite{DLYZ25}) involving also a contraction.

Let us fix a group $\Ga$ and a normal subgroup $\Ga_0$. Let $\bfA_{\Ga,\Ga_0}$ be the $(2,1)$-category of $(\mM, \al, \be_0, \iota)$ where 
 \begin{itemize}
\item $\mM$ is a $\La$-linear ordinary additive monoidal category;
\item $\al: \Ga\to \Aut^{\otimes}(\mM)$ is a monoidal functor; \item $\be_0: \Ga_0\to \mM$ is a $\Ga$-equivariant monoidal functor; and
\item finally $\iota$ is an isomorphism of monoidal functors 
$$\iota: \Int\circ\be_0\simeq \al|_{\Ga_0}: \Ga_0\to \Aut^{\otimes}(\mM),$$ 
where $\Int\circ\be_0$ sends $\ga_0\in \Ga_0$ to the inner automorphism $\be_0(\ga_0)\otimes (-)\otimes \be_0(\ga_0^{-1})$ of $\mM$. We require $\iota$ be compatible with the $\Ga$-equivariant structures of both $\Int\circ\be_0$ and $\al|_{\Ga_0}$ ($\ga\in \Ga$ acts on $\Aut^{\otimes}(\mM)$ via conjugation by $\al(\ga)$).
\end{itemize}
    
Let $\bfB_{\Ga}$ be the category of pairs $(\mN, \be)$ where
\begin{itemize}
\item $\mN$ is a $\La$-linear ordinary additive monoidal category; and
\item  $\be: \Ga\to \mN$ is a monoidal functor. 
\end{itemize}

We have a forgetful functor
\[
r^{\Ga}_{\Ga_0}: \bfB_{\Ga}\to \bfA_{\Ga,\Ga_0}
\]
sending $(\mN, \be)$ to $(\mN, \Int\circ\be, \be|_{\Ga_0}, \iota)$ where $\iota$ is the canonical isomorphism between $\Int\circ(\be|_{\Ga_0})$ and $(\Int\circ\be)|_{\Ga_0}$.

Note that when $\Ga_0=\Ga$, $r^{\Ga}_\Ga$ gives an equivalence $\bfB_{\Ga}\cong\bfA_{\Ga,\Ga}$.

\begin{lem}\label{lem: adjunction}
    This forgetful functor admits a left adjoint 
    \[
    i^{\Ga}_{\Ga_0}:\bfA_{\Ga,\Ga_0}\to \bfB_{\Ga}.
    \]
    We denote the image of $(\mM, \al,\be_0, \iota)$ under $i^{\Ga}_{\Ga_0}$ by $(\mM\rtimes^{\Ga_0}\Ga, \be)$.
\end{lem}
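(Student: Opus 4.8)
The goal is to construct a left adjoint $i^{\Ga}_{\Ga_0}$ to the forgetful functor $r^{\Ga}_{\Ga_0}\colon \bfB_{\Ga}\to \bfA_{\Ga,\Ga_0}$. The plan is to build, for each object $(\mM,\al,\be_0,\iota)$ of $\bfA_{\Ga,\Ga_0}$, the universal monoidal category $\mM\rtimes^{\Ga_0}\Ga$ equipped with a monoidal functor $\be\colon\Ga\to\mM\rtimes^{\Ga_0}\Ga$ through which $\be_0$ and $\al$ factor in the prescribed way, and then verify the universal property. Concretely, I would first form the ordinary categorical semidirect product $\mM\rtimes\Ga$ in the sense of \cite{DLYZ25}: objects are pairs $(m,\ga)$ with $m\in\mM$, $\ga\in\Ga$, with $\Hom((m,\ga),(m',\ga'))$ equal to $\emptyset$ unless $\ga=\ga'$, in which case it is $\Hom_\mM(m,m')$; the monoidal structure is $(m,\ga)\otimes(m',\ga')=(m\otimes\al(\ga)(m'),\ \ga\ga')$, with associativity and unit constraints assembled from those of $\mM$, the monoidal structure of $\al$, and the cocycle data. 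One then has the tautological monoidal functor $\Ga\to\mM\rtimes\Ga$, $\ga\mapsto(\mathbf{1}_\mM,\ga)$, and an evident inclusion $\mM\hookrightarrow\mM\rtimes\Ga$.

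\textbf{Imposing the contraction.} The object $(\mM,\al,\be_0,\iota)$ in $\bfA_{\Ga,\Ga_0}$ carries the extra datum that, after restriction to $\Ga_0$, the automorphisms $\al|_{\Ga_0}$ are \emph{inner}, trivialized by $\be_0$ via $\iota$. In $\mM\rtimes\Ga$ this is not yet reflected: the objects $(\mathbf{1}_\mM,\ga_0)$ and $\be_0(\ga_0)$ (viewed inside $\mM\subset\mM\rtimes\Ga$) are \emph{distinct}, though abstractly they induce the same conjugation automorphism. The construction $\mM\rtimes^{\Ga_0}\Ga$ should force them to be canonically isomorphic, in a way compatible with the monoidal structure and $\Ga$-equivariance. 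I would realize this as a localization (or, equivalently, a quotient by the monoidal ideal generated by certain coherent isomorphisms): adjoin to $\mM\rtimes\Ga$ invertible $2$-morphisms $\phi_{\ga_0}\colon(\mathbf{1}_\mM,\ga_0)\xrightarrow{\ \sim\ }(\be_0(\ga_0),1)$ for $\ga_0\in\Ga_0$, subject to (i) multiplicativity $\phi_{\ga_0\ga_0'}=(\phi_{\ga_0}\otimes\id)\circ(\id\otimes\text{(transport of $\phi_{\ga_0'}$)})$ matching the monoidal structure of $\be_0$, (ii) the identity $\phi_1=\id$, (iii) compatibility with $\iota$, i.e. conjugation by $\phi_{\ga_0}$ intertwines the action of $(\mathbf{1}_\mM,\ga_0)$ on $\mM$ (which is $\al(\ga_0)$ by construction) with the inner action of $\be_0(\ga_0)$, precisely via $\iota(\ga_0)$, and (iv) $\Ga$-equivariance of the whole family $\{\phi_{\ga_0}\}$ under the conjugation action encoded in $\al$. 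Equivalently, and perhaps more cleanly, $\mM\rtimes^{\Ga_0}\Ga$ can be described as the ordinary semidirect product of $\mM$ with $\Ga$ \emph{along the contracted action}: one uses $\be_0$ to twist, so that on objects one only remembers $\Ga/\Ga_0$-worth of extra grading while $\Ga_0$ gets absorbed into $\mM$. I would then define $\be\colon\Ga\to\mM\rtimes^{\Ga_0}\Ga$ as the composite of the tautological functor with the localization, and observe $r^{\Ga}_{\Ga_0}(\mM\rtimes^{\Ga_0}\Ga,\be)$ comes with a canonical map from $(\mM,\al,\be_0,\iota)$ in $\bfA_{\Ga,\Ga_0}$ — the unit of the adjunction.

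\textbf{Verifying the universal property.} For the adjunction I would check directly that, for any $(\mN,\be')\in\bfB_{\Ga}$, restriction along the unit gives an equivalence
\[
\Hom_{\bfB_{\Ga}}\bigl((\mM\rtimes^{\Ga_0}\Ga,\be),(\mN,\be')\bigr)\ \xrightarrow{\ \sim\ }\ \Hom_{\bfA_{\Ga,\Ga_0}}\bigl((\mM,\al,\be_0,\iota),\,r^{\Ga}_{\Ga_0}(\mN,\be')\bigr).
\]
A monoidal functor out of the ordinary semidirect product $\mM\rtimes\Ga$ is, by the standard universal property, the same as a monoidal functor $F_\mM\colon\mM\to\mN$ together with a monoidal functor $F_\Ga\colon\Ga\to\mN$ and a compatibility between $F_\Ga$-conjugation and $F_\mM\circ\al$; after localizing, such an $F$ descends to $\mM\rtimes^{\Ga_0}\Ga$ precisely when the images of $\phi_{\ga_0}$ become isomorphisms with the required coherences — which is exactly the statement that the $\Ga_0$-part of the data factors through $\be'|_{\Ga_0}$ compatibly with $\iota$. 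This is the content of a morphism in $\bfA_{\Ga,\Ga_0}$. Unraveling both sides, the two groupoids of data are visibly the same, and full faithfulness on $2$-morphisms is a routine check. I expect the main obstacle to be purely bookkeeping: writing down the associativity and equivariance coherences for the family $\{\phi_{\ga_0}\}$ so that the localized category is genuinely monoidal and the functor $\be$ is genuinely monoidal, and matching these coherences term-by-term with the data of $\iota$ and of a morphism in $\bfA_{\Ga,\Ga_0}$. Since everything here lives in ordinary (not higher) additive monoidal categories — only $2$- and $1$-morphisms, with the $(2,1)$-categorical structure of $\bfA_{\Ga,\Ga_0}$ and $\bfB_{\Ga}$ — these coherences are finite in number and can be verified by hand; the observation at the end of the excerpt that $r^{\Ga}_\Ga$ is an equivalence when $\Ga_0=\Ga$ is a useful sanity check that the construction collapses correctly in that case.
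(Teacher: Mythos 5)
Your construction takes a genuinely different route from the paper's. The paper simply writes down the category $\mM\rtimes^{\Ga_0}\Ga$ directly: objects are finite formal direct sums $\bigoplus_\ga(m_\ga,\ga)$, and the hom-groups are $\Hom((m,\ga),(m',\ga'))=\Hom_\mM\bigl(m\otimes\be_0(\ga(\ga')^{-1}),m'\bigr)$ when $\ga(\ga')^{-1}\in\Ga_0$ and $0$ otherwise, with the monoidal structure, $\be$, and the explicit unit $m\mapsto(m,1)$ and counit $(n,\ga)\mapsto n\otimes\be(\ga)$; the triangle identities are then left to the reader. You instead start from the plain semidirect product and realize the contraction as a universal construction, formally adjoining isomorphisms $\phi_{\ga_0}\colon(\mathbf 1_\mM,\ga_0)\xrightarrow{\sim}(\be_0(\ga_0),1)$ subject to coherences, and prove adjointness directly from the universal property rather than via unit/counit. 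This is a defensible alternative: your route makes the universal property transparent and makes it clear what the coherences with $\iota$ and with the $\Ga$-equivariance are for, whereas the paper's route makes the answer explicit at the cost of having to check the adjunction by hand. The two should produce the same category — tracing your $\phi_{\ga_0}$ through the monoidal structure turns a morphism $(m,\ga)\to(m',\ga')$ with $\ga(\ga')^{-1}=\ga_0$ into a morphism $m\otimes\be_0(\ga_0)\to m'$, matching the paper's hom-formula.

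Two small points to tighten. First, the objects of $\bfA_{\Ga,\Ga_0}$ and $\bfB_\Ga$ are $\La$-linear additive categories, so the off-diagonal hom-sets in your $\mM\rtimes\Ga$ should be $0$ rather than $\emptyset$, and you should take finite formal direct sums (the additive envelope) to stay inside the correct $(2,1)$-category; the paper bakes this in from the start. Second, "localization" and "quotient by a monoidal ideal" are not the same operation, and neither is quite what you want: adjoining the isomorphisms $\phi_{\ga_0}$ is a pushout along a free isomorphism, not an inversion of pre-existing morphisms (there are no nonzero morphisms between the two objects in $\mM\rtimes\Ga$) and not a quotient. The existence of such a pushout in the $(2,1)$-category of $\La$-linear additive monoidal categories, and the fact that it has the expected explicit hom-groups, is the one place where your argument is genuinely nontrivial and where the paper's explicit construction earns its keep — once you have an explicit model, all of this is manifest. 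It would be worth either constructing that pushout explicitly (at which point you have recovered the paper's definition) or citing a result guaranteeing such universal constructions exist in this $(2,1)$-categorical setting.
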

\begin{proof}
    We define $\mM\rtimes^{\Ga_0}\Ga$ as follows. Objects are finite formal direct sums $\oplus_{\ga\in \Ga}(m_\ga,\ga)$ where $m_\ga\in\mM$ and only finitely many $m_\ga$ are nonzero. 
We define 
    \[
    \Hom((m,\ga),(m',\ga'))=\left\{\begin{array}{ll} \Hom_{\mM}(m\otimes\beta_0(\ga(\ga')^{-1}),m') &\quad  \mbox{ if } \ga(\ga')^{-1}\in \Ga_0,\\ 0 & \quad  \mbox{ oterwise.} \end{array}\right. 
    \] 
    The monoidal structure is given by
    \[
    (m,\gamma)\otimes (m',\gamma')=(m\otimes \alpha(\gamma)(m'), \gamma\gamma').
    \]
    Finally, $\beta: \Gamma\to \mM\rtimes^{\Ga_0}\Ga$ sends $\gamma$ to $(1_{\mM},\gamma)$.

    We construct the unit map $u:\id_{\bfA_{\Ga,\Ga_0}}\to r^{\Ga}_{\Ga_0}\circ i^{\Ga}_{\Ga_0}$ as follows. For $(\mM,\al,\be_0, \iota)\in \bfA_{\Ga,\Ga_0}$, $u(\mM,\al,\be_0, \iota)$ is the $\Ga$-equivariant functor $\mM\to \mM\rtimes^{\Ga_0}\Ga$ sending $m\in \mM$ to $(m,1)\in \mM\rtimes^{\Ga_0}\Ga$.     
    
    We construct the co-unit map $c:i^{\Ga}_{\Ga_0}\circ r^{\Ga}_{\Ga_0}\to \id_{\bfB_{\Ga}}$ as follows. For $(\mN, \be)\in \bfB_{\Ga}$, $c(\mN,\be)$ is the functor $\mN\rtimes^{\Ga_0}\Ga\to \mN$ sending $(n,\ga)$ to $n\otimes \be(\ga)$. We omit the details of checking that $c$ and $u$ satisfy the conditions for adjunction.
\end{proof}

When $\Ga_0=\{1\}$, $\mM\rtimes^{\{1\}}\Ga$ was denoted by $\mathrm{Proj}_{\La,\Ga}\ltimes\mM$ in \cite[7.2.11]{DLYZ25} (and was called the smashed product there).

\begin{rem}
    The construction in this subsection
    should have a higher categorical version although we choose not to formulate it precisely here. The current version is sufficient for our discussions in the next subsection.
\end{rem}

\subsection{Reductive groups}

Next, we assume that $G$ is tamely ramified quasi-split connected reductive group over $F$. It is enough to prove Theorem~\ref{intro: main thm} for the monodromic version, as the equivariant case can be recovered from the monodromic version using \eqref{eq: equiv vs mon} and its analogue for $\mH^\flat_{\unip,\af}$.

We recall once again that the action of $\wt\Omega$ on $\mH_\mon^\circ$ is induced from the conjugation action of $\wt\Omega$ on $I^+_k\bs LG^\circ_k/I^+_k$ (resp. on $I^{\flat,+}_k\bs LG^{\flat,\circ}_k/I^{\flat,+}_k$). In addition, the map \eqref{eq:wtOmega to Hk} induces a monoidal functor
\begin{equation}\label{eq: wtOmega to H}
\Shv(\wt\Omega)\to \Shv_{\mon}(I^+_k\bs LG_k/I^+_k),
\end{equation}
which sends $\La_{\tilde\omega}$ (the skyscraper sheaf supported at $\tilde\omega$) to the cofree monodromic standard sheaf on $I^+_k\bs I^+_k\tilde\omega/I^+_k$ trivialized at the point $\tilde\omega$. It restricts to a monoidal functor
\begin{equation}\label{eq: wtOmegacirc to Hcirc}
\Shv(\wt\Omega^{\circ})\to \Shv_\mon(I^+_k\bs LG^\circ_k/I^+_k)
\end{equation}
which is $\wt\Omega\rtimes\langle\sigma\rangle$-equivariant, by \eqref{eq: all in one diagram}. The functors \eqref{eq: wtOmega to H} and \eqref{eq: wtOmegacirc to Hcirc} have obvious equal characteristic counterparts.

We apply the general construction from \S\ref{ss:semidirect} as follows. On the one hand, 
we let $\mT il_{\mon,\af}^{\circ}\subset \mH_{\mon,\af}^{\circ}$ (resp. $\mT il_{\mon,\af}^{\flat,\circ}\subset \mH_{\mon,\af}^{\flat,\circ}$) be the full subcategory of cofree monodromic tilting sheaves as defined in \cite[\S 5]{DLYZ25} (for mixed characteristic, see \cite[\S 4.2]{Zhu25}). It is an ordinary monoidal additive category. 
The $\wt\Omega\rtimes\langle\sigma\rangle$-actions preserve these subcategories and therefore we obtain a $\wt\Omega\rtimes\langle\sigma\rangle$-equivariant equivalence
\begin{equation*}\label{eq: equiv for tiltings}
\mT il_{\mon,\af}^{\circ}\cong \mT il_{\mon,\af}^{\flat,\circ}
\end{equation*}
In addition, \eqref{eq: wtOmegacirc to Hcirc} restricts to a monoidal functor $\wt\Omega^\circ\to \mT il_{\mon,\af}^\circ$. Thus we obtain a $\sigma_*$-equivariant equivalence
\begin{equation}\label{eq: final equiv-1}
\mT il_{\mon,\af}^{\circ}\rtimes^{\wt\Omega^\circ} \wt\Omega\cong \mT il_{\mon,\af}^{\flat, \circ}\rtimes^{\wt\Omega^\circ}\wt\Omega. 
\end{equation}

On the other hand, we also have full subcategories $\mT il_{\mon,\af}\subset \mH_{\mon,\af}$ and $\mT il_{\mon,\af}^\flat\subset \mH_{\mon,\af}^{\flat}$ of cofree monodromic tilting sheaves in the monodromic affine Hecke categories (with all blocks included), and \eqref{eq: wtOmega to H} restricts to a monoidal functor $
\wt\Omega\to \mT il_{\mon,\af}$. 

Then Lemma \ref{lem: adjunction} provides a $\sigma_*$-equivariant monoidal functor
\begin{equation}\label{eq: final equiv-2}
\mT il_{\mon,\af}^{\circ}\rtimes^{\wt\Omega^\circ} \wt\Omega\to \mT il_{\mon,\af},
\end{equation}
which can be checked to be an equivalence by looking at each connected component of $LG_k$, which can then be translated to the neutral component by an element of $\wt\Omega$. Repeat the same arguments in the equal characteristic case gives a $\sigma_*$-equivariant monoidal equivalence
\begin{equation}\label{eq: final equiv-3}
\mT il_{\mon,\af}^{\flat,\circ}\rtimes^{\wt\Omega^\circ} \wt\Omega\to \mT il^{\flat}_{\mon,\af}.
\end{equation}

Combining \eqref{eq: final equiv-1}-\eqref{eq: final equiv-3}, we obtain a monoidal equivalence
\begin{equation}\label{eq: tilt equiv}
    \mT il_{\mon,\af}\cong \mT il_{\mon,\af}^{\flat}.
\end{equation}

Finally, as explained in \cite[Theorem 5.28]{DLYZ25}, $\mH_{\mon,\af}$ (resp. $\mH_{\mon,\af}^{\flat}$) can be recovered as a monoidal category from the additive monoidal category $\mT il_{\mon,\af}$ (resp. $\mT il_{\mon,\af}^\flat$). Indeed in \cite[Theorem 5.28]{DLYZ25} we worked with a fixed $W$-orbit $\Xi$ of mod $\ell$ reductions of character sheaves on $\mS_k$. We denote the resulting subcategory of $\mH_{\mon,\af}$ by $\mH_{\Ximon,\af}$, which can be recovered from its full subcategory of cofree tilting sheaves $\mT il_{\Ximon,\af}$ as follows: $\mH_{\Ximon,\af}$ embeds full faithfully into $\Ind(K^b\mT il_{\Ximon,\af})$, whose image is the ind-completion of subcategory generated by standard (constructible) objects.  Now $\mH_{\mon,\af}=\bigoplus_{\Xi}\mH_{\Ximon,\af}$ and $\mT il_{\mon,\af}=\bigoplus_{\Xi}\mT il_{\Ximon,\af}$ as monoidal categories, with convolution defined summand by summand. Similar remarks apply to the equal characteristic version. 
The desired equivalence $\mH_{\mon,\af}\cong \mH_{\mon,\af}^{\flat}$ then follows from \eqref{eq: tilt equiv}.

\address{
Department of Mathematics, Massachusetts Institute of Technology\\
Cambridge, MA, USA.\\
\email{zyun@mit.edu}\\
}
\medskip

\address{
Department of Mathematics, Stanford Univeristy\\
Stanford, CA, USA.\\
\email{zhuxw@stanford.edu}}


\begin{thebibliography}{99}

\bibitem[AL$^+$23]{ALWY}J. Ansch\"utz, J. Louren\c{c}o, Z. Wu, J. Yu, \emph{Gaitsgory's central functor and the Arkhipov-Bezrukavnikov equivalence in mixed characteristic}, arXiv:2311.04043.

\bibitem[Ban23]{Ban23}K. Bando, \emph{Derived Satake category and Affine Hecke category in mixed characteristics}, arXiv:2310.16244v2.

\bibitem[BT84]{BT84}F. Bruhat, J. Tits, \emph{Groupes r\'eductifs sur un corps local. II. Sch\'emas en groupes. Existence
d’une donn\'ee radicielle valu\'ee}, Publ. Math. IH\'ES {\bf 60} (1984), 197--376. 

\bibitem[DL$^+$25]{DLYZ25}G. Dhillon, Y. W. Li, Z. Yun, X. Zhu,  \emph{Endoscopy for metaplectic affine Hecke categories}, arXiv:2507.16667.

\bibitem[LNY24]{LNY24}P, Li, D. Nadler, Z. Yun, \emph{Functions on the commuting stack via Langlands duality}, Ann. Math. {\bf 200} (2024), 609--748.

\bibitem[PZ17]{PZ17}G. Pappas, X. Zhu, \emph{Local models of Shimura varieties and a conjecture of Kottwitz}, Invent. Math.
{\bf 194} (2013), no. 1, 147--254.

\bibitem[TT20]{TT20}J. Tao, R. Travkin, \emph{The affine Hecke category is a monoidal colimit}, arXiv:2009.10998v3.

\bibitem[Zhu17]{Zhu17}X. Zhu, \emph{Affine Grassmannians and the geometric Satake in mixed characteristic}, Ann. Math. {\bf 185} (2017), 403--492.

\bibitem[Zhu25]{Zhu25}X. Zhu, \emph{Tame categorical local Langlands correspondence}, arXiv: 2504.07482.

\end{thebibliography}
\end{document}